\documentclass{article}
\usepackage[utf8]{inputenc}
\usepackage[a4paper, total={6in, 8in}]{geometry}
\usepackage{mathrsfs}
\usepackage{amssymb}
\usepackage{amsmath}
\usepackage{setspace}
\usepackage{tikz}
\usetikzlibrary{positioning}
\usepackage{enumerate}
\usepackage{hyperref}
\usepackage{bm}
\usepackage{array}
\usepackage{comment}
\usepackage{amsthm}
\usepackage{enumerate}
\numberwithin{equation}{section}
\newcolumntype{C}{>{$}c<{$}}
\hypersetup{
    colorlinks=true,      
    urlcolor=cyan,
    citecolor=blue
    }
\newcommand{\parens}[1]{\left( #1 \right)}
\newcommand{\set}[1]{\left\{ #1 \right\}}
\newcommand{\tensor}{\otimes}
\newcommand{\spn}{\operatorname{span}}
\newcommand{\bbar}[1]{\overline{#1}}
\newcommand{\gl}{\mathfrak{gl}}
\renewcommand{\sp}{\mathfrak{sp}}

\renewcommand{\arraystretch}{1}
\newcommand{\opad}{\operatorname{ad}}
\newcommand{\dprod}[2]{\bbar #1 \diamond \bbar #2}

\theoremstyle{plain}
\newtheorem{lemma}{Lemma}[section]
\newtheorem{proposition}[lemma]{Proposition}
\newtheorem{theorem}[lemma]{Theorem}

\theoremstyle{definition}

\newtheorem{example}[lemma]{Example}
\newtheorem{remark}[lemma]{Remark}

\title{Reduction Algebras Involving Symplectic Lie Algebras}
\author{Dustin Baker}
\date{\today}

\begin{document}

\maketitle

\begin{abstract}


A complete presentation for the diagonal reduction algebra of $\mathfrak{gl}_n$ was found by Khoroshkin and Ogievetsky. In this paper we present a complete set of ordering relations for a reduction algebra $\mathcal{B}_n$ associated to $\gl_n$ in a parabolic subalgebra of $\sp_{2n}$. We also study its connection to the diagonal reduction algebra of $\sp_{2n}$.  We use braid group action by Zhelobenko automorphisms to reduce the computations required. Another important tool we employ is the ABRR equation for the extremal projector. As an application we prove that all homogeneous ordering relations for the diagonal reduction algebra of $\sp_{2n}$ are obtained from the relations in $\mathcal{B}_n$ by applying type $C_n$ Zhelobenko automorphisms.

\end{abstract}
\section{Introduction} \label{sec:intro}

A classic problem in representation theory is understanding how irreducible representations $V$ of a finite-dimensional complex Lie algebra $\mathfrak{g}$ decompose into a direct sum of irreducible representations of a semisimple subalgebra $\mathfrak{k}$. Given a triangular decomposition $\mathfrak{k}=\mathfrak{n}_-\oplus\mathfrak{h}\oplus\mathfrak{n}_+$,  \cite{Mick1973} introduced the \emph{step algebra} $\mathcal{S}(\mathfrak{g},\mathfrak{k})$, which we now call the \emph{Mickelsson algebra}, which acts on the space of $\mathfrak{n_+}$-invariant vectors in $V$. In fact, $V^{\mathfrak{n_+}}$ is an irreducible $S(\mathfrak{g},\mathfrak{k})$-module \cite{Homb1975}. The Mickelsson algebra describes how irreducible representations $V$ of $\mathfrak{g}$ decompose as a direct sum of irreducible representations of $\mathfrak{k}$.
In particular, $S(\mathfrak{g}, \mathfrak{k})$ contains step operators which map the highest weight vector of one subrepresentation to that of another.

The complicated structure of Mickelsson algebras has since become a subject of interest independent of its origin. In the late 1980's, Zhelobenko observed a connection between extremal projectors and localized Mickelsson algebras, denoted $\mathcal{Z}(\mathfrak{g},\mathfrak{k})$ \cite{Zhe1987}. Following \cite{Kho2008}, we call these \emph{reduction algebras}.
Those associated to $\mathfrak{g}$ diagonally embedded into $\mathfrak{g}\times\mathfrak{g}$ are called \emph{diagonal} reduction algebras. Reduction algebras are defined for a finite dimensional Lie algebras $\mathfrak{g}$ and Lie subalgebra $\mathfrak{k}$ that are reductive in $\mathfrak{g}$. There are also generalizations to Lie superalgebras \cite{Hart2022} and quantum groups \cite{Mud2015}.

In the 2010's, Khoroshkin and Ogievetsky provided a complete presentation of a certain class of reduction algebras called a diagonal reduction algebra, namely of $\gl_n$ in $\gl_n\times \gl_n$. They made use of Zhelobenko automorphisms to reduce the complexity
of the problem \cite{Kho2011}. These form a representation of the braid group by algebra automorphisms of the reduction algebra.

When $\mathfrak{k}$ is reductive in $\mathfrak{g}$,  we have $\mathfrak{g} = \mathfrak{k}\oplus V$, where $V$ is complimentary $\mathfrak{k}$-module. 
The presentation of a reduction algebra can be given via \emph{ordering relations} \cite{Zhe1989}, relations between elements of a particular ordered generating set $(x_1,\ldots, x_n)$ of $\mathfrak{g}$ of the form 
\begin{equation}
\label{eq: ordering relation form}
x_jx_i = \sum_{k < \ell} h_{ijk\ell}x_{k}x_{\ell} + \sum_{k} h_{ijk} x_k + h_{ij},
\end{equation} 
where $h_{ijk\ell},h_{ijk},h_{ij}$ belong to a localization of $U(\mathfrak{h})$  (see Theorem \ref{thm: generators and ordering relations} for details).

In this paper, we provide a complete presentation for the reduction algebra  $\mathcal{B}_{n} = \mathcal{Z}(\mathfrak{p}_{2n},\gl_n)$ associated to $\gl_n$ in $\mathfrak{p}_{2n}$ where $\mathfrak{p}_{2n}$ is the maximal parabolic subalgebra of $\sp_{2n}$ obtained by removing the negative long simple root (Theorem \ref{thm: All relations for Bn}). We compute only seven ordering relations directly, and obtain the rest by the action of the braid group of type $A_n$. This requires careful attention, as the braid group action must preserve the order of every term in a given ordering relation. Details can be found in Theorem \ref{thm: All relations for Bn}.

In addition, we explore alternate realizations of $\mathcal{B}_n$ (Theorem \ref{thm: alt realizations}). We show $\mathcal{B}_n$ fits into the family of reduction algebras $\mathcal{Z}(\gl_n\ltimes S^m(\mathbb{C}^n), \gl_n)$ with $m=2$; The case when $m=1$ was addressed by \cite{Homb1976}. We also prove that $\mathcal{B}_n$ arises as a subalgebra the diagonal reduction algebra $\mathcal{D}(\sp_{2n}) = \mathcal{Z}(\sp_{2n}\times\sp_{2n}, \sp_{2n})$.

Expanding on Theorem \ref{thm: All relations for Bn}, we show that every \emph{homogeneous} ordering relation (those of the form in Equation \eqref{eq: ordering relation form} with $h_{ijk}=h_{ij}=0$) for $\mathcal{D}(\sp_{2n})$ can be obtained from $\mathcal{B}_n$ by the action of the braid group of type $C_n$ (Theorem \ref{thm: ord rels for non roots}).

\section*{Acknowledgement}
The author was supported in part by the Army Research Office grant W911NF-24-1-0058.

\section{Global Definitions and Notation} \label{sec: defs}
We define the reduction algebra and describe the tools we use to understand its properties. Throughout this paper, all vector spaces and algebras are over $\mathbb{C}$.
\subsection{Prerequisites} \label{sec: prerequisites}
Let $\mathfrak{k}$ be a finite-dimensional reductive Lie algebra over $\mathbb{C}$, and let $\mathfrak{g}$ be a Lie algebra over $\mathbb{C}$ containing $\mathfrak{k}$ such that the adjoint action of $\mathfrak{k}$ on $\mathfrak{g}$ is completely reducible. Fix a Cartan subalgebra $\mathfrak{h}$ of $\mathfrak{k}$ with associated root system $\Delta$ of $\mathfrak{k}$. Let $\Delta_+$ be a choice of positive roots, and $\Delta_-$ the corresponding negative roots. Let $\Pi$ be the set of simple roots.
For each positive root $\beta\in\Delta_+$ we fix an $\mathfrak{sl}_2$-triple $(e_{-\beta},\, h_\beta,\, e_\beta)$.
We write \[\mathfrak{k} = \mathfrak{n}_-\oplus\mathfrak{h}\oplus\mathfrak{n}_+,\] where $\mathfrak{n}_\pm=\spn_\mathbb{C}\set{e_\beta\mid \beta\in \Delta_\pm}$. Denote the universal enveloping algebra of $\mathfrak{g}$ by $U(\mathfrak{g})$. Let $I_+ := U(\mathfrak{g})\mathfrak{n}_+$ and $I_-:= \mathfrak{n}_-U(\mathfrak{g})$; these are left and right ideals, respectively. The \textit{Mickelsson} algebra (or \textit{step algebra}) $\mathcal{S}(\mathfrak{g},\mathfrak{k})$ introduced in \cite{Mick1973} is given by 
\begin{equation}\label{eq:mickelsson-alg}
\mathcal{S}(\mathfrak{g}, \mathfrak{k}) := N(I_+)/I_+ = \set{u+I_+\in U(\mathfrak{g})/I_+\mid I_+u\subseteq I_+},
\end{equation}
where $N(I_+)$ is the normalizer of $I_+$, the largest subalgebra containing $I_+$ as a two-sided ideal.

Following \cite{Zhe1987} we consider the reduction algebra $\mathcal{Z}(\mathfrak{g}, \mathfrak{k})$, defined by \begin{equation}\mathcal{Z}(\mathfrak{g},\mathfrak{k}) :=N(I'_+)/I'_+, \end{equation} where $I'_+ = U'(\mathfrak{g})\mathfrak{n}_+$. Here $U'(\mathfrak{g}) = D \tensor_{U(\mathfrak{h})} U(\mathfrak{g})$ where $D = \mathbb{C}[h_\gamma, (h_\gamma + k)^{-1}\mid \gamma\in \Delta_+,k\in \mathbb{Z}]$. That is, $U'(\mathfrak{g})$ is the localization of $U(\mathfrak{g})$ with respect to the Ore denominator set generated by $\set{h_\gamma+k\mid \gamma\in \Delta_+,k\in\mathbb{Z}}$. 
Lastly, the localization of a step algebra is isomorphic to Zhelobenko's reduction algebra:
\begin{equation}\label{eq:localization-commutes}
    D\otimes_{U(\mathfrak{h})} S(\mathfrak{g},\mathfrak{k}) \cong \mathcal{Z}(\mathfrak{g},\mathfrak{k}).
\end{equation}

\subsection{Extremal Projector} \label{sec: extremal proj general}

The extremal projector  \cite{Tol2005},\cite{Zhe1989} plays an important role in understanding the structure of $Z(\mathfrak{g},\mathfrak{k})$. It is an element of the Taylor extension of $U'(\mathfrak{g})$, denoted $T(U'(\mathfrak{g}))$ \cite{Herl2017}. It is uniquely determined by the conditions \cite{Tol2005} \begin{equation}[\mathfrak{h},P]=0,\quad e_\gamma P = P e_{-\gamma} = 0 \quad (\gamma\in \Delta_+), \qquad P^2=P, P\neq 0.\end{equation} We describe explicitly the multiplicative form of $P$. First, there exists a \textit{convex} (or \textit{normal}) order on $\Delta_+$; that is, a total order in which every positive root that can be expressed as the sum of two positive roots must lie between its summands. Let $\overset{\rightarrow}{\Delta}_+= (\gamma_1,\ldots, \gamma_m)$ be such an ordering. For each $\gamma\in \Delta_+$ and $t\in \mathbb{Z}$, define \begin{equation}P_\gamma[t] = \sum_{n\geq 0} \frac{(-1)^n}{n!} \varphi_{\gamma,n}[t] e_{-\gamma}^n e_\gamma^n,\end{equation} where $\varphi_{\gamma,n}[t] = \prod_{k=1}^n (h_\gamma + t + k)^{-1}$ (with empty product $1$).
\begin{proposition} \label{prop: extremal proj general}
\cite{Ast1973},\cite{Herl2017} The extremal projector $P$ admits a factorization in the ordered product \begin{equation} P = \underset{{\gamma\in \overset{\rightarrow}{\Delta}_+}}{\overset{\rightarrow}{\prod}} P_{\gamma}[\rho(h_{\gamma})]\end{equation} for any choice of normal order $\set{\gamma_1,\ldots, \gamma_m}$ on $\Delta_+$ and $\rho = \frac{1}{2}\sum_{i=1}^m \gamma_i$ is the Weyl vector.
\end{proposition}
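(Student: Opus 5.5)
We verify that the ordered product $\tilde P := \prod^{\rightarrow}_{\gamma} P_\gamma[\rho(h_\gamma)]$ satisfies the defining conditions of the extremal projector, $[\mathfrak{h},\tilde P]=0$, $e_\gamma\tilde P=\tilde P e_{-\gamma}=0$ for $\gamma\in\Delta_+$, and $\tilde P^2=\tilde P\neq 0$. Uniqueness, as quoted from \cite{Tol2005}, then gives $\tilde P=P$, and in particular shows that $\tilde P$ does not depend on the chosen normal order.

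\textbf{Rank one.} First treat a single root $\gamma$. The factor $P_\gamma[t]$ commutes with $\mathfrak{h}$, since each term $e_{-\gamma}^ne_\gamma^n$ has weight zero. We need the standard $\mathfrak{sl}_2$ identity
\[
[e_\gamma,e_{-\gamma}^n]=n\,e_{-\gamma}^{n-1}(h_\gamma-n+1).
\]
Using it, compute $e_\gamma P_\gamma[t]$. The contribution of the $n$-th term through $e_{-\gamma}^n e_\gamma^{n+1}$ cancels the commutator contribution of the $(n+1)$-st term exactly when the shift is $t=0$. So $P_\gamma[0]$ satisfies
\[
e_\gamma P_\gamma[0]=P_\gamma[0]e_{-\gamma}=0 .
\]
Idempotence then follows: $P_\gamma[0]^2=P_\gamma[0]$, because every term of the second factor other than $1$ begins with $e_{-\gamma}$ on the left, and these are killed by $P_\gamma[0]e_{-\gamma}=0$. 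For general $t$, $P_\gamma[t]$ annihilates $e_\gamma$ only after shifting $h_\gamma$. This is the reason the argument $\rho(h_\gamma)$ appears: it records the shift of $h_\gamma$ produced when the factor is moved past the root vectors of the other roots.

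\textbf{Induction on a normal order.} Let $\beta=\gamma_1$ be the first root of the normal order. We may take $\gamma_1$ to be simple, since normal orders correspond to reduced words $s_{i_1}\cdots s_{i_m}$ of $w_0$ via $\gamma_k=s_{i_1}\cdots s_{i_{k-1}}\alpha_{i_k}$. The remaining factors are then the image, under the action of the Weyl group element $s_\beta$ (lifted to automorphisms of the Taylor extension), of a product over a normal order for $\Delta_+\setminus\{\beta\}$.

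We want $e_\alpha\tilde P=0$ for every simple root $\alpha$; this suffices, because the $e_\gamma$ are generated by the simple root vectors. The approach is to show that the product is independent of the normal order. Any two reduced words of $w_0$ are connected by braid moves, so it suffices to prove invariance in rank two: the identities $P_{\alpha}P_{\alpha+\beta}P_\beta=P_\beta P_{\alpha+\beta}P_\alpha$ (type $A_2$), together with the analogous identities for $B_2$ and $G_2$, with their $\rho$-shifts. These are the Asherova--Smirnov--Tolstoy rank-two identities. I expect them to be the \emph{main obstacle}. One way to handle them is by verifying them on Verma modules, where both sides act as the projection onto highest weight vectors along $\mathfrak{n}_-M$ (for generic weights); faithfulness of the action on generic Verma modules then gives equality.

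\textbf{Conclusion.} Once order-independence is established, for any simple $\alpha$ choose a normal order beginning with $\alpha$. For a simple root $\alpha$ we have $\rho(h_\alpha)=1$, the factor $P_\alpha[1]$ absorbs the shift, and the rank-one computation gives $e_\alpha\tilde P=0$. Symmetrically, choosing a normal order ending with $\alpha$ gives $\tilde P e_{-\alpha}=0$.

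Idempotence is obtained the same way: writing $\tilde P=1+(\text{terms in } \mathfrak{n}_-U')$, the relation $\tilde P e_{-\alpha}=0$ for all simple $\alpha$ kills $\tilde P\cdot\mathfrak{n}_-U'$, so
\[
\tilde P^2=\tilde P .
\]
Finally $\tilde P\neq 0$, since its constant term is $1$. By uniqueness, $\tilde P=P$.
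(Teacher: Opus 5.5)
The paper gives no proof of this proposition; it quotes it from Asherova--Smirnov--Tolstoy and Herlemont.

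\textbf{The gap: order-independence is not proved.} Your frame is sound: check $[\mathfrak{h},\tilde P]=0$, $e_\gamma\tilde P=\tilde P e_{-\gamma}=0$, $\tilde P^2=\tilde P\neq 0$, then invoke uniqueness. Your closing steps are also correct, provided the ordered product is independent of the normal order. These are: annihilation via a normal order starting or ending with $\alpha$, idempotence, and nonvanishing. But that independence is the entire content of the statement beyond rank one, and the route you suggest does not work.

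For a braid block at the start of a reduced word, you propose to show that both orderings act on generic Verma modules as the projection onto highest-weight vectors. That claim \emph{is} the rank-two case of the proposition. By your own method, each ordering is visibly killed only by the root vector of its leftmost factor, so the argument is circular.

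Worse, blocks deeper in a reduced word are not rank-two extremal projectors at all. A braid move at position $k$ involves the roots $w\alpha_i, ws_i\alpha_j,\dots$ with $w=s_{i_1}\cdots s_{i_{k-1}}$, and their shifts are $\rho(h_{w\gamma})=\langle w^{-1}\rho,\gamma^\vee\rangle$, not the rank-two values. Here is a concrete case in type $A_3$:
\begin{itemize}
\item The normal order $\alpha_2,\ \alpha_1+\alpha_2,\ \alpha_1,\ \alpha_1+\alpha_2+\alpha_3,\ \alpha_2+\alpha_3,\ \alpha_3$ (reduced word $s_2s_1s_2s_3s_2s_1$) contains the braid block $\alpha_1,\ \alpha_1+\alpha_2+\alpha_3,\ \alpha_2+\alpha_3$ with shifts $1,3,2$.
\item The $\mathfrak{sl}_3$ with simple roots $\alpha_1$ and $\alpha_2+\alpha_3$ would need shifts $1,2,1$.
\item In normal-ordered form, $e_{\alpha_2+\alpha_3}P_{\alpha_2+\alpha_3}[2]P_{\alpha_1+\alpha_2+\alpha_3}[3]P_{\alpha_1}[1]$ has the nonzero term $(h_{\alpha_2+\alpha_3}+1)^{-1}e_{\alpha_2+\alpha_3}$. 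So there is no highest-weight projection to compare with.
\end{itemize}
What you actually need are \emph{shifted} rank-two identities. In type $A_2$ this means $P_\alpha[a]P_{\alpha+\beta}[a+b]P_\beta[b]=P_\beta[b]P_{\alpha+\beta}[a+b]P_\alpha[a]$ with shifts $a,b$ not both equal to $1$. You also need the four- and six-factor analogues for $B_2$ and $G_2$; type $B_2=C_2$ is needed for $\mathfrak{sp}_{2n}$. These must be proved by direct manipulation of the series. Alternatively, you need a different argument proving $e_\alpha\tilde P=0$ for a fixed normal order. This is exactly the input the paper takes from the literature, and your proposal leaves it open.

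\textbf{A slip in the rank-one step.} Use the paper's normalization $\varphi_{\gamma,n}[t]=\prod_{k=1}^n(h_\gamma+t+k)^{-1}$. Combining the $n$-th term with the commutator part of the $(n+1)$-st, the coefficient of $e_{-\gamma}^n e_\gamma^{n+1}$ in $e_\gamma P_\gamma[t]$ is proportional to $1-(h_\gamma+n)(h_\gamma+t+n-1)^{-1}$. For $n=0$ the coefficient of $e_\gamma$ is $(t-1)(h_\gamma+t-1)^{-1}$. So the cancellation occurs for $t=1$, not $t=0$, and $P_\gamma[1]$ is the $\mathfrak{sl}_2$ projector. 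This is what your conclusion actually uses through $\rho(h_\alpha)=1$.

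The shift is therefore intrinsic already in rank one, not an artifact of moving factors past other root vectors. For non-simple $\gamma$, the larger shifts $\rho(h_\gamma)$ are precisely why individual factors and inner blocks fail to be projectors. That is why order-independence cannot be extracted from projector properties of subalgebras.
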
 
The Chevalley anti-involution is the anti-automorphism $\epsilon: \mathfrak{g}\to \mathfrak{g}$ for which $e_{\pm\gamma}\mapsto e_{\mp \gamma}$ for $\gamma\in \Delta_+$ and $h\mapsto h$ for $h\in \mathfrak{h}$. It can be extended to $U'(\mathfrak{g})$ and further to $T(U'(\mathfrak{g}))$. A noteworthy consequence of the factorization of $P$ is that it is fixed by the Chevalley anti-involution.

\subsection{Double Coset Algebra} \label{sec: double coset alg general}
The double coset space $I_- \backslash U'(\mathfrak{g})/I_+$ is closely related to the reduction algebra. For convenience we write $\mathbb{I} = I_- + I_+$ and denote the double coset space by $U'(\mathfrak{g})/\mathbb{I}$. We can equip the double coset space with the \textit{diamond} product, given by \begin{equation}(a+\mathbb{I})\diamond(b+\mathbb{I}):=aPb + \mathbb{I}.\end{equation} This can be interpreted as follows \cite{Kho2008}: given $\bbar{a},\bbar{b}\in U'(\mathfrak{g})/\mathbb{I}$, take representatives $a\in U'(\mathfrak{g})$ and $b\in U'(\mathfrak{g})/I_+$. Here, $U'(\mathfrak{g})/I_+$ is the \textit{relative\footnote{We call it relative because $I_+$ is generated by $\mathfrak{k}_+$, not $\mathfrak{g}_+$.} universal Verma module}, and is notably a left $U'(\mathfrak{g})$-module. We can interpret $P$ as a map $P: U'(\mathfrak{g})/I_+\to U'(\mathfrak{g})/I_+$ as in \cite{Zhe1989}. The diamond product is computed by \begin{equation}(a+\mathbb{I})\diamond(b+\mathbb{I}) = a.P(b+I_+) + I_-;\end{equation} that is, apply the map $P$ to $b+I_+$, act by $a$, then mod out by $I_-$. We usually write $\bbar{a} = a +\mathbb{I}$ for simplicity. The definition of $\diamond$ is independent of the choice of representatives $a$ and $b$. \\
The significance of the double coset space is made clear by a remarkable result of \cite{Kho2008}, which goes back to \cite[Theorem 1]{Mick1973}:
\begin{proposition} \label{prop: double coset realization isomorphism}
The space $(U'(\mathfrak{g})/\mathbb{I},\diamond)$ is an associative algebra and is isomorphic to $\mathcal{Z}(\mathfrak{g}, \mathfrak{k})$ via mutually inverse isomorphisms \begin{equation} \phi: \mathcal{Z}(\mathfrak{g}, \mathfrak{k})\to U'(\mathfrak{g})/\mathbb{I}; \qquad (x+I_+)\mapsto (x + \mathbb{I}),\end{equation} and \begin{equation} P:U'(\mathfrak{g})/\mathbb{I}\to \mathcal{Z}(\mathfrak{g}, \mathfrak{k}); \qquad (x+\mathbb{I})\mapsto P(x+I_+). \end{equation}
\end{proposition}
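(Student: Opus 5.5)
We need to show that $\phi$ and $P$ are well defined and mutually inverse, that $\phi$ is multiplicative, and that $\diamond$ is associative. Associativity will follow once we have a multiplicative bijection from an associative algebra, so we do not prove it directly. Throughout we use only the defining properties $e_\gamma P = P e_{-\gamma}=0$ for $\gamma\in\Delta_+$, $P^2=P$ and $[\mathfrak{h},P]=0$. We also use the standard facts that $P$ acts on the relative universal Verma module $U'(\mathfrak{g})/I_+$ and that $P \equiv 1$ modulo $I_-$ on that module. The latter holds because every term of the multiplicative form in Proposition \ref{prop: extremal proj general} other than $1$ begins with some $e_{-\gamma}$, $\gamma\in\Delta_+$.

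\emph{Step 1: the image of $P$ is $\mathcal{Z}$.} Take $x\in U'(\mathfrak{g})$ and set $v=P(x+I_+)$. For $\gamma\in\Delta_+$ we have $e_\gamma v = e_\gamma P(x+I_+)=0$ in $U'(\mathfrak{g})/I_+$. So $v$ is annihilated by $\mathfrak{n}_+$, and $\mathfrak{n}_+v\subseteq I_+$ for any representative of $v$. Since $I'_+=U'(\mathfrak{g})\mathfrak{n}_+$, this gives $I'_+v\subseteq I'_+$, so $v\in N(I'_+)/I'_+=\mathcal{Z}(\mathfrak{g},\mathfrak{k})$. The map descends to the double coset space: if $x\in I_+$ then $P(x+I_+)=0$ trivially. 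If $x\in I_-$, write $x=e_{-\gamma}y$; then $Pe_{-\gamma}=0$ kills it. Hence $P$ is well defined on $U'(\mathfrak{g})/\mathbb{I}$, and $\phi$ is obviously well defined.

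\emph{Step 2: the two maps are mutually inverse.} First, $\phi\circ P=\mathrm{id}$, since $P(x+I_+)\equiv x$ modulo $I_-$. Second, for $v=x+I_+\in\mathcal{Z}$ we need $P(v)=v$. Here $\mathfrak{n}_+v=0$, and every non-identity term of the expansion of $P$ ends with some $e_\gamma^n$, $n\ge1$, which annihilates $v$. So $Pv=v$, which gives $P\circ\phi=\mathrm{id}$. The infinite sums in $P$ must make sense here: on any given vector of the Verma module only finitely many terms are nonzero after reduction. This is guaranteed by the Taylor-extension framework, which we take as given.

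\emph{Step 3: multiplicativity.} For $a+I_+,\,b+I_+\in\mathcal{Z}$ the product in $\mathcal{Z}$ is $ab+I_+$. On the other side, $\phi(a)\diamond\phi(b)=aPb+\mathbb{I}$. Since $b+I_+$ is $\mathfrak{n}_+$-invariant, Step 2 gives $P(b+I_+)=b+I_+$. Thus $aPb\equiv ab$ modulo $I_+$, and hence also modulo $\mathbb{I}$. It follows that $\phi$ is a multiplicative bijection, so $\diamond$ is associative and $\phi$, $P$ are mutually inverse algebra isomorphisms.

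The main obstacle is justifying that $P$ acts as a well-defined operator on $U'(\mathfrak{g})/I_+$ and that the identities $e_\gamma P=0$ and $P\equiv 1 \bmod I_-$ hold as operator statements there. This needs local finiteness: modulo $I_+$, high powers of $e_\gamma$ kill a given element, by weight considerations on a fixed PBW monomial. This is where the localization $D$ is essential, since the denominators $h_\gamma+k$ appearing in $\varphi_{\gamma,n}$ must be invertible.
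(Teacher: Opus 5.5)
The paper gives no proof of its own here; it defers to Khoroshkin--Ogievetsky and Mickelsson. Your argument is correct and is essentially the standard one behind that citation: $P$ acts on the relative Verma module $M=U'(\mathfrak{g})/I_+$ as the projection onto $M^{\mathfrak{n}_+}=\mathcal{Z}(\mathfrak{g},\mathfrak{k})$ along $\mathfrak{n}_-M=\mathbb{I}/I_+$, and multiplicativity of $\phi$ follows because $P$ fixes $\mathfrak{n}_+$-invariants. The details you leave implicit are routine. The coefficients $\varphi_{\gamma,n}[t]\in D$ standing to the left of $e_{-\gamma}^n$ can be moved to its right because $D$ is stable under the integer shifts $h_\beta\mapsto h_\beta-\gamma(h_\beta)$, so those terms really lie in $I_-$. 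Independence of $\diamond$ from the choice of representatives follows from the same identities $e_\gamma P=Pe_{-\gamma}=0$ on $M$.
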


Since $\mathfrak{k}$ is reductive in $\mathfrak{g}$, we have $\mathfrak{g} = \mathfrak{k}\oplus\mathfrak{p}$ for some complimentary $\mathfrak{k}$-module $\mathfrak{p}$. Let $\set{p_i}$ be a weight basis for $\mathfrak{p}$ such that $p_i$ has weight $\gamma_i$. We choose this basis to be compatible with a partial order on the weights: $\gamma_j - \gamma_i\in \Delta_+$ only if $i < j$. Combining Proposition \ref{prop: double coset realization isomorphism} with \cite[Theorem 6]{Zhe1989} gives the following theorem:

\begin{theorem} \label{thm: generators and ordering relations}
The double coset space $U'(\mathfrak{g})/\mathbb{I}$ is generated as a $U'(\mathfrak{h})$-ring by $\set{\bbar{p}_i}$, along with the weight relations \begin{equation}
[h,\bbar{p}_i] = \gamma_i(h)\bbar{p}_i
\end{equation} and ordering relations \begin{equation}
\dprod{p_j}{p_i} = \sum_{k,l; k\leq l} a_{ijkl} \dprod{p_k}{p_l} + \sum_k b_{ijk} \bbar{p}_k + c_{ij},
\end{equation} where $a_{ijkl},b_{ijk},c_{ij}\in U'(\mathfrak{h})$.
\end{theorem}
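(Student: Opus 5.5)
The plan is to transport Zhelobenko's description of the reduction algebra to the double coset side. By Proposition \ref{prop: double coset realization isomorphism}, $(U'(\mathfrak{g})/\mathbb{I},\diamond)\cong\mathcal{Z}(\mathfrak{g},\mathfrak{k})$ via $\phi$ and $P$, so it suffices to check three things: that the classes $\bbar{p}_i$ generate over $U'(\mathfrak{h})$, that the weight relations hold, and that every product $\dprod{p_j}{p_i}$ with $i<j$ can be rewritten in the stated ordered form. I would also note the mechanism that makes this work. The subspace spanned by $U'(\mathfrak{h})$ times ordered monomials $p_{k_1}\cdots p_{k_r}$, with $k_1\le\dots\le k_r$, maps isomorphically onto $U'(\mathfrak{g})/\mathbb{I}$. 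This follows from PBW for the decomposition $\mathfrak{g}=\mathfrak{n}_-\oplus\mathfrak{h}\oplus\mathfrak{p}\oplus\mathfrak{n}_+$: order the basis with $\mathfrak{n}_-$ first and $\mathfrak{n}_+$ last, so that any monomial containing a factor from $\mathfrak{n}_\pm$ lies in $\mathbb{I}$.

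\textbf{Generation.} I would induct on the filtration degree. The key computation is that for an ordered monomial,
\[
\bbar{p}_{k_1}\diamond\big(\overline{p_{k_2}\cdots p_{k_r}}\big)=p_{k_1}P(p_{k_2}\cdots p_{k_r}+I_+)+I_- .
\]
Write $P=1+\sum(\text{terms }e_{-\gamma}^n(\cdots)e_\gamma^n \text{ with } n\ge1)$, using the factorization of Proposition \ref{prop: extremal proj general}. Each correction term has the form $\cdots e_{-\beta}\cdots$, with $\mathfrak{n}_+$ acting on the monomial and producing strictly lower filtration degree. After multiplying by $p_{k_1}$ on the left, we commute $e_{-\beta}$ leftward past $p_{k_1}$ modulo $I_-$; the commutator $[p_{k_1},e_{-\beta}]\in\mathfrak{p}$ again keeps the degree below $r$. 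Hence the diamond product equals the ordered monomial plus lower-degree terms, and induction gives generation. The weight relations follow because $\mathfrak{h}$ normalizes $I_\pm$ and commutes with $P$.

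\textbf{Ordering relations.} For $i<j$, the element $\dprod{p_j}{p_i}$ lies in filtration degree $\le2$. By the basis statement, its class is a $U'(\mathfrak{h})$-combination of the images of $p_kp_l$ with $k\le l$, of the $p_k$, and of $1$. By the same leading-term argument, each image $\overline{p_kp_l}$ equals $\dprod{p_k}{p_l}$ plus degree $\le1$ terms. Substituting back gives the claimed form with coefficients in $U'(\mathfrak{h})$. Completeness, meaning that these relations together with the weight relations present the algebra, follows by a standard diamond lemma argument. Using the relations we can reduce any diamond word to ordered form, and the ordered diamond monomials are independent because their leading terms are the PBW basis.

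\textbf{Main obstacle.} The step I expect to be hardest is rigor in the leading-term claim. $P$ lives in a Taylor extension, so I must check that applying it to an element of $U'(\mathfrak{g})/I_+$ gives only finitely many nonzero terms; this holds because $\mathfrak{n}_+$ acts locally nilpotently on fixed-degree elements. I must also check that the rational coefficients $\varphi_{\gamma,n}$ land in $D$. Here I would simply cite \cite[Theorem 6]{Zhe1989} rather than redo it.
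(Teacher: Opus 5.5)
Your framing is the paper's: transport through Proposition~\ref{prop: double coset realization isomorphism}. The paper's whole proof is that proposition combined with \cite[Theorem 6]{Zhe1989}. The self-contained argument you sketch, however, rests on a false lemma.

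Modulo $I_+$ one has $E\cdot m\equiv(\opad E)(m)$ for $E\in U(\mathfrak{n}_+)$. Since $\mathfrak{p}$ is $\mathfrak{k}$-stable, $\opad E$ sends a product of $r-1$ elements of $\mathfrak{p}$ to a sum of such products, so it preserves the degree rather than lowering it. Likewise, moving $F\in U(\mathfrak{n}_-)$ left past $p_{k_1}$ modulo $I_-$ just replaces $p_{k_1}$ by some $p'\in\mathfrak{p}$ of weight $\gamma_{k_1}-\lambda$. So every term $HFE$ of $P$ with $E$ of weight $\lambda\neq 0$ contributes $H'\,p'\,(\opad E)(p_{k_2}\cdots p_{k_r})$, which has full degree $r$.

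The paper itself gives a counterexample. In $\mathcal{B}_n$ the $B_{ij}$ commute, so no lower-degree terms can occur. Yet $\dprod{B_{13}}{B_{12}}=B_{12}B_{13}+\frac{2}{h_{\alpha_1}+1}B_{11}B_{23}+\mathbb{I}$. This product is ordered in your convention, since the weight of $B_{13}$ is that of $B_{12}$ minus $\alpha_2$. Hence both of your claims fail: ``ordered diamond monomial $=$ ordered PBW monomial $+$ lower degree'' is false, and so is ``$\overline{p_kp_l}=\dprod{p_k}{p_l}+$ degree $\le 1$''. Your generation, ordering-relation and independence steps all rest on these claims. Citing Zhelobenko only for finiteness and rationality of the coefficients does not repair them.

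The missing idea is triangularity with respect to weights. This is exactly what the hypothesis ``$\gamma_j-\gamma_i\in\Delta_+$ only if $i<j$'' is for, and your argument never uses it.
\begin{enumerate}
\item $p'$ is obtained from $p_{k_1}$ by a chain of negative root vectors with nonzero intermediate results, so it is a combination of $p_c$ with $c<k_1$.
\item By the Leibniz rule, each factor of $(\opad E)(p_{k_2}\cdots p_{k_r})$ has index at least that of the factor it came from.
\item Sorting into PBW order costs only lower-degree terms. After sorting, every same-degree correction is an ordered monomial that is strictly smaller in lexicographic order.
\end{enumerate}
So the matrix expressing ordered diamond monomials through ordered PBW monomials is unitriangular modulo lower degree, hence invertible over $U'(\mathfrak{h})$. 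With this in place your outline goes through. Spanning and independence follow, and inverting the degree-two block gives the ordering relations. Alternatively, quote \cite[Theorem 6]{Zhe1989} for the full PBW-type statement in $\mathcal{Z}(\mathfrak{g},\mathfrak{k})$ and transport it through the isomorphism, as the paper does.
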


We now shift our attention to the double coset algebra. With the goal of finding ordering relations of $U'(\mathfrak{g})/\mathbb{I}$, we introduce a useful tool to reduce the complexity of this task.

\subsection{Zhelobenko Automorphisms} \label{sec: Zhelo autos general}
For each $\gamma\in \Delta_+$, the map $\tau_\gamma: \mathfrak{g}\to\mathfrak{g}$ given by \begin{equation}\tau_\gamma = (\exp \opad e_\gamma)\circ \exp(-\opad e_{-\gamma})\circ(\exp\opad e_\gamma)\end{equation} is a Lie algebra automorphism for which $\tau_\gamma(e_{\pm\gamma}) = -e_{\mp\gamma}$ and $\tau(h_\gamma) = -h_\gamma$. For simple roots $\gamma\in \Pi$ these give rise to automorphisms of $\mathcal{Z}(\mathfrak{g},\mathfrak{k})$, called \emph{Zhelobenko automorphisms} $\breve{q}_\gamma$ \cite{Kho2008}, given by \begin{equation} \breve{q}_\gamma(x) = \sum_{n\geq 0} \frac{1}{n!} g_{n,\gamma} (\opad e_\gamma)^n (\opad e_{-\gamma})^n(\tau_\gamma(x)) + \mathbb{I},\end{equation} where $g_{n,\gamma} = \prod_{k=1}^n (h_\gamma - k +1)^{-1}.$ If $\Pi = \set{\alpha_1,\ldots, \alpha_n}$, we write $\tau_i:=\tau_{\alpha_i}$ and $\breve{q}_i := \breve{q}_{\alpha_i}$. Both $\tau_\gamma$ and $\breve{q}_\gamma$ satisfy braid relations; that is, with $\sigma_\gamma = \tau_\gamma$ or $\breve{q}_\gamma$, 
\begin{equation} \sigma_i \sigma_j\sigma_i\cdots = \sigma_j\sigma_i\sigma_j\cdots\end{equation} where the number of terms on each side is $m_{ij}$ and is determined by the corresponding entry $a_{ij}$ of the Cartan matrix for $\mathfrak{k}$: $m_{ij} = 2$ if $a_{ij} = 0$, $m_{ij} = 3$ if $a_{ij}a_{ji} = 1$, $m_{ij} = 4$ if $a_{ij}a_{ji} = 2$, and $m_{ij}=6$ if $a_{ij}a_{ji}=3$.

\section{The Reduction Algebra \texorpdfstring{$\mathcal{Z}(\mathfrak{p}_{2n}, \gl_n)$}{Z(p2n,gln)}} \label{sec: red alg of max para}

\subsection{The Symplectic Lie Algebra} \label{sec: symplectic Lie alg}
The symplectic Lie algebra $\sp_{2n}$ (over $\mathbb{C}$) can be realized as \begin{equation}\sp_{2n} = \set{\begin{bmatrix}
A & B \\ C & -A^\top
\end{bmatrix}: A,B,C\in \mathbb{C}^{n\times n}, B=B^\top, C=C^\top}\end{equation} equipped with the commutator bracket. We choose a basis for $\sp_{2n}$ in terms of the standard basis $\set{E_{ij}}_{1\leq i,j\leq 2n}$for $\mathbb{C}^{2n\times 2n}$: \begin{align}
A_{ij} &= E_{ij} - E_{n+j,n+i} &&\text{for $1\leq i,j\leq n$,} \\
B_{ii} &= E_{i,n+i} && \text{for $1\leq i\leq n$,} \\
B_{ij} &= E_{i,n+j} + E_{j,n+i} && \text{for $1\leq i < j\leq n$,} \\
C_{ij} &= B_{ij}^\top && \text{for $1\leq i\leq j\leq n$.}
\end{align}
It is convenient to define $B_{ji} = B_{ij}$ and $C_{ji}=C_{ij}$ when $j > i$.
The Lie subalgebra $\spn_{\mathbb{C}}\set{A_{ij}\mid 1\leq i,j\leq n}$ is isomorphic to $\gl_{n}$. The Lie subalgebra \begin{equation}
\mathfrak{p}_{2n} = \spn_\mathbb{C}\set{A_{ij},B_{ij}\mid 1\leq i,j\leq n}    
\end{equation} is a maximal parabolic subalgebra of $\sp_{2n}$. In this way, $(\mathfrak{p}_{2n},\gl_n)$ is a reductive pair, hence we can define the reduction algebra $\mathcal{Z}(\mathfrak{p}_{2n},\gl_n)$ as in Section \ref{sec: prerequisites}.\\

\subsection{Extremal Projector} \label{sec: extremal proj Bn case}

We write $\gl_n = \mathfrak{n}_-\oplus \mathfrak{h}\oplus \mathfrak{n}_+$ where $\mathfrak{n}_{\pm} = \spn\set{A_{ij}\mid i \lessgtr j, 1\leq i,j\leq n}$ and $\mathfrak{h} = \spn\set{A_{ii}\mid 1\leq i\leq n}$. The positive root associated to $A_{ij}$ (with $i<j$) is $\varepsilon_i - \varepsilon_j$, where $\varepsilon_k(A_{\ell\ell}) = \delta_{k\ell}$. We often write $e_{\varepsilon_i- \varepsilon_j} = A_{ij}$.

The lexicographic order of the positive root vectors corresponds to a normal order of the positive roots; that is, $A_{ij} < A_{k\ell}$ when $i < k$ or $i=k$ and $j < \ell$. Indeed, the only way a positive root can be the sum of two other positive roots is \begin{equation*}
\varepsilon_i - \varepsilon_j = (\varepsilon_i - \varepsilon_k) + (\varepsilon_k - \varepsilon_j)
\end{equation*} with $i < k < j$, which coincides with the order $A_{ik} < A_{ij} < A_{kj}$. 
The Weyl vector in this case is \begin{equation}
\rho = \sum_{k=1}^n \parens{\frac{n+1}{2}-k}\varepsilon_k
\end{equation}
Following Proposition \ref{prop: extremal proj general} the extremal projector has the form \begin{equation} \label{eq: extremal proj for Bn}
P = \prod_{1\leq i < j\leq n}^\text{lex} P_{\varepsilon_i - \varepsilon_j}[j-i],
\end{equation} 
where \begin{equation}
P_\gamma[t] = \sum_{k\geq 0} \frac{(-1)^k}{k!}\varphi_{\gamma,k}[t]e_{-\gamma}^k e_\gamma^k
\end{equation}
and \begin{equation}
\varphi_{\gamma,k}[t] = \prod_{l=1}^k (h_\gamma + t + l)^{-1}.
\end{equation} We can then define the double coset algebra as in Section \ref{sec: double coset alg general}, which we call \begin{equation} \mathcal{B}_n := (U'(\mathfrak{p}_{2n}) / \mathbb{I}, \diamond). \end{equation}
\subsection{Zhelobenko Automorphisms} \label{sec: Zhelo autos Bn case}

We name the simple roots $\alpha_i = \varepsilon_i - \varepsilon_{i+1}$ for $1\leq i < n$. The subscript `$i$' is used in place of `$\alpha_i$'. \\
The action of the automorphisms $\tau_i$ on $\mathcal{X}=\set{A_{ij}, B_{ij}}$ can be described by the rule: 
\begin{equation*}
\text{$\tau_i$ interchanges indices $i$ and $i+1$. The sign changes for every instance of $i$ replaced by $i+1$.}\end{equation*}

We may now describe the Zhelobenko automorphisms. There are only four types of images that must be described. Recall from Section \ref{sec: Zhelo autos general} that \begin{equation}
\breve{q}(\bbar{x}) = \sum_{n\geq 0}\frac{1}{n!} g_{n,\gamma} (\opad e_\gamma)^n(\opad e_{-\gamma})^n(\tau_\gamma(x)) + \mathbb{I},
\end{equation} where \(
g_{n,\gamma} = \prod_{k=1}^n (h_\gamma - k + 1)^{-1}. \) \begin{lemma} \label{lem: zhel auto Bn} For each $i < n$ we consider the image of $\mathcal{X}_i = (\mathcal{X}\cup\set{h_i,I_i})\setminus\set{A_{ii},A_{i+1,i+1}}$, where $I_i = A_{ii} + A_{i+1,i+1}$.
\begin{enumerate}[{\rm 1)}]
    \item $\breve{q}_i(\bbar{x}) = \frac{h_i+1}{h_i}\tau_i(x) + \mathbb{I}$ when  $x\in\set{ A_{i+1,j},A_{ji}, B_{i+1,j}}_{j\neq i,i+1}$.
    \item $\breve{q}_i(\bbar{x}) = \frac{h_i+2}{h_i}\tau_i(x) + \mathbb{I}$ when  $x\in\set{h_i,B_{i,i+1}}$.
    \item $\breve{q}_i(\bbar{x}) = \frac{h_i + 1}{h_i-1}\tau_i(x) + \mathbb{I}$ when  
     $x\in\set{ A_{i+1,i},B_{i+1,i+1}}$.
    \item $\breve{q}_i(\bbar{x}) = \tau_i(x) + \mathbb{I}$ for every other $x\in\mathcal{X}_i$.
\end{enumerate}
\end{lemma}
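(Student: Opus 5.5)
The plan is a direct computation from the formula for $\breve{q}_i$ recalled in Section \ref{sec: Zhelo autos general}. Write $e=A_{i,i+1}$, $f=A_{i+1,i}$, $h=h_i=A_{ii}-A_{i+1,i+1}$. For each $x\in\mathcal{X}_i$ we compute $y=\tau_i(x)$ using the index-swapping rule. Then we evaluate the sum $\sum_{n\ge0}\frac1{n!}g_{n,\alpha_i}(\opad e)^n(\opad f)^n(y)$. Every element of $\mathcal{X}_i$ is an $\opad h$-weight vector, so the sum truncates quickly. We then only need to rewrite each surviving term modulo $\mathbb{I}=\mathfrak{n}_-U'+U'\mathfrak{n}_+$ as a $U'(\mathfrak{h})$-multiple of $\tau_i(x)$.

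First, the vectors $y$ that commute with $f$ contribute only the $n=0$ term, so $\breve q_i(\bbar x)=\tau_i(x)+\mathbb{I}$. This covers case 4. Here $y$ is killed by $\opad f$ because, under the $\mathfrak{sl}_2$-triple $(f,h,e)$, $y$ is a lowest weight vector. Examples are $\tau_i(A_{i,j})$, $\tau_i(B_{jk})$ with $j,k\ne i,i+1$, $I_i$, $\tau_i(B_{ii})=B_{i+1,i+1}$, and the appropriate $A_{j,i+1}$, $A_{i,j}$, $B_{ij}$.

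For the remaining $x$, $\tau_i(x)$ lies in an $\mathfrak{sl}_2$-string of length $2$ or $3$, or is $f$ or $h$ itself. Case 1 consists of doublets such as $A_{i+1,j}, A_{ji}, B_{i+1,j}$. There only $n=1$ contributes, giving $g_1[e,[f,y]] = h^{-1}[e,[f,y]]$ with $[e,[f,y]]$ a multiple of $y$. This alone would not produce $(h+1)/h$; the extra $1$ comes from reordering. The $n=1$ term also produces products such as $[f,y]e$ or $f[e,y]$. Modulo $\mathbb{I}$ we have $fu\equiv0$ and $ue\equiv0$, so we move $f$ to the left and $e$ to the right using weights. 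The commutator generated in doing so yields $y$ times a shift of $h$, and the $h^{-1}$ factor and the shift combine into $\frac{h+1}{h}$.

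The triplets cover cases 2 and 3. One triplet is $B_{i,i}, B_{i,i+1}, B_{i+1,i+1}$. The other is the adjoint triplet $e,h,f$, with $x=h_i$ and $x=A_{i+1,i}$, where $\tau_i(A_{i+1,i})=-A_{i,i+1}$. Here the terms $n=1$ and $n=2$ both contribute, with $g_2=h^{-1}(h-1)^{-1}$. After again moving $f$ left and $e$ right modulo $\mathbb{I}$, the middle weight (for $h_i$, $B_{i,i+1}$) gives $\frac{h+2}{h}$. The top weight (for $A_{i+1,i}$, $B_{i+1,i+1}$) gives $\frac{h+1}{h-1}$, since both denominators appear and partially cancel against a numerator $(h+1)h$. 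I will do the adjoint triplet and the $B$ triplet separately, since $\tau_i(h)=-h$ and $\tau_i(f)=-e$ need care with signs and quadratic terms like $fe$.

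The main obstacle is the bookkeeping of ordering modulo $\mathbb{I}$. Coefficients in $U'(\mathfrak{h})$ sitting to the left of $e$ or $f$ shift when commuted past them ($e\,\phi(h)=\phi(h-2)e$). This shifting is exactly what turns naive coefficients into the stated rational functions, so I would fix a convention of always writing Cartan factors on the left and verify each case against the $\mathfrak{sl}_2$ case first.
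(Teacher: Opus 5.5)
Your case analysis matches the paper's: truncation at $n\le 2$, and sorting $y=\tau_i(x)$ by its position in an $\mathfrak{sl}_2$-string (lowest weight for case 4, top of a doublet for case 1, middle or top of a triplet for cases 2 and 3). But your account of where the coefficients come from is wrong, and carried out as described it would not produce them.

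The summands $(\opad e)^n(\opad f)^n(y)$ are iterated commutators, i.e.\ elements of $\mathfrak{p}_{2n}$ itself, not products in $U'(\mathfrak{p}_{2n})$. So no terms like $[f,y]e$, $f[e,y]$ or $fe$ ever arise. Since $g_{n,\alpha_i}$ already stands on the left, nothing is commuted past anything and no shift $h\mapsto h\pm 2$ occurs.

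In case 1 you say only $n=1$ contributes and that the missing $1$ in $\frac{h_i+1}{h_i}$ comes from reordering modulo $\mathbb{I}$. In fact that $1$ is simply the $n=0$ term $\tau_i(x)$. Even if you expand $[e,[f,y]]=efy-eyf-fye+yfe$ in $U'$ and reduce modulo $\mathbb{I}$ using $[e,y]=0$, you get $hy-yh=[h,y]$: the same bracket back, with no extra Cartan factor.

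Likewise, your closing claim that commuting Cartan factors ``turns naive coefficients into the stated rational functions'' is false. The naive coefficients already are the stated ones, and any extra shift would spoil them.

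What you need instead is the $\mathfrak{sl}_2$ identity $(\opad e)^n(\opad f)^n(y)=c_n\,y$, with
\begin{itemize}
\item $(c_1,c_2)=(1,0)$ for the top of a doublet,
\item $(c_1,c_2)=(2,0)$ for the middle of a triplet,
\item $(c_1,c_2)=(2,4)$ for the top of a triplet.
\end{itemize}
In particular, in case 2 the $n=2$ term vanishes, contrary to your statement that both $n=1$ and $n=2$ contribute for triplets. Then
\[
\breve{q}_i(\bbar{x})=\Bigl(1+\frac{c_1}{h_i}+\frac{c_2}{2h_i(h_i-1)}\Bigr)\tau_i(x)+\mathbb{I},
\]
which gives $\frac{h_i+1}{h_i}$ in case 1, $\frac{h_i+2}{h_i}$ in case 2, and $1+\frac{2}{h_i}+\frac{2}{h_i(h_i-1)}=\frac{h_i+1}{h_i-1}$ in case 3. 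This is precisely the paper's argument; its four case relations encode exactly these $c_n$.
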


\begin{proof}
Let $\gamma=\alpha_i$ be a simple root. In all cases the elements of $\mathcal{X}_i$ belong to $\gl_{n}$. As such, $(\opad e_{-\gamma})^k=0$ for $k\ge 3$ and we only the first three terms of $\breve{q}_i$ may be nonzero. Thus for $x\in \mathcal{X}_i$, we have \begin{equation}
\breve{q}_i(\bbar{x}) = \tau_\gamma(x) + \frac{1}{h_\gamma} [e_\gamma,[e_{-\gamma}, \tau_\gamma(x)]] + \frac{1}{2h_\gamma (h_\gamma-1)}[e_\gamma,[e_\gamma,[e_{-\gamma},[e_{-\gamma},\tau_\gamma(x)]]]] + \mathbb{I}.
\end{equation}
Each element of $\mathcal{X}_i$ belongs to one of the following four cases:
\begin{enumerate}[{\rm 1)}]
    \item \([e_{-\gamma},[e_{-\gamma},\tau_\gamma(x)]] = 0, \quad [h_\gamma, \tau_\gamma(x)] = \tau_\gamma(x),\quad [e_\gamma,\tau_\gamma(x)] = 0,\) \\
    \item \([e_{-\gamma},[e_{-\gamma},\tau_\gamma(x)]] = 0, \quad [h_\gamma, \tau_\gamma(x)] = 0,\quad [e_\gamma,[e_{-\gamma},\tau_\gamma(x)]] = 2\tau_\gamma(x),\) \\
    \item \([e_\gamma,[e_\gamma,[e_{-\gamma},[e_{-\gamma},\tau_\gamma(x)]]]] = 4\tau_\gamma(x), \quad [h_\gamma, \tau_\gamma(x)] = 2\tau_\gamma(x),\quad [e_\gamma,\tau_\gamma(x)] = 0,\) \\
    \item \([e_{-\gamma},\tau_\gamma(x)] = 0.\)
\end{enumerate}
For instance, $B_{i+1, j}$ (for $j\neq i,i+1$) has $\tau(B_{i+1,j}) = B_{ij}$, which satisfies the relations of Case $1$: 
\[[A_{i+1,i},B_{ij}] = B_{i+1,j},\quad [A_{i+1,i},B_{i+1,j}] = 0,\quad [h_i, B_{ij}] = B_{ij},\quad [A_{i,i+1},B_{ij}] = 0.\]
The information encoded in these case is enough to complete the computation. For instance, in Case $1$, we have \begin{align*}
\breve{q}_i(\bbar{x}) &= \tau_\gamma(x) + \frac{1}{h_\gamma} [e_\gamma,[e_{-\gamma}, \tau_\gamma(x)]] + \mathbb{I} \\
&= \tau_\gamma(x) + \frac{1}{h_\gamma} ([[e_\gamma,e_{-\gamma}],\tau_\gamma(x)] + [e_{-\gamma},[e_{\gamma},\tau_\gamma(x)]]) + \mathbb{I}\\
&=  \tau_\gamma(x) + \frac{1}{h_\gamma} [h_\gamma,\tau_\gamma(x)] + \mathbb{I}\\
&= \tau_\gamma(x) + \frac{1}{h_\gamma}\tau_\gamma(x) + \mathbb{I}\\
&= \frac{h_\gamma+1}{h_\gamma}\tau_\gamma(x) + \mathbb{I}.
\end{align*}
The remaining cases similarly reduce to a computation in $\mathfrak{p_{2n}}$.
\end{proof}

\subsection{The Structure of \texorpdfstring{$\mathcal{B}_n$}{Bn}} \label{sec: structure of Bn}
Following Theorem \ref{thm: generators and ordering relations}, $\mathcal{B}_n$ is generated as a $U'(\mathfrak{h}$)-module by $\mathcal{X} :=\set{\bbar{B}_{ij}}_{1\leq i\leq j\leq n}$. Moreover, they are related by \emph{ordering relations}; that is, relations of the form \begin{equation}
\dprod{y}{x} = \dprod{x}{y} + \sum \dprod{x_i}{y_i} + \sum\bbar{z_i},
\end{equation}
where $x < y$ and $x_i < y_i$ in some ordering compatible with the partial order on weights, and $x,y,x_i,y_i,z_i$ are elements of $\mathcal{X}$. For $i \leq j$ and $k\leq \ell$, we impose the order \(\bbar{B}_{ij} < \bbar{B}_{k\ell}\) when $ij < k\ell$. Note, the reverse of this order is compatible with the partial order on the weights. We show there is an order reversing anti-isomorphism of $\mathcal{B}_n$ in \ref{rem: Order reverse}.

There are only $17$ diamond product pairs that we compute directly to find relations. The remaining defining relations are obtained via Zhelobenko automorphisms. \\
For computations, we use the recurrence relation for $P_{\gl_n}$ discussed in Section $5.1$ of \cite{Kho2011}:
\begin{equation}P = \sum_{\lambda\in Q^+} J_\lambda,\end{equation} 
where each $J_\lambda$ has the form
\begin{equation}J_\lambda = \sum_i H_{i,\lambda} F_{i,\lambda} E_{i,\lambda}\end{equation}
such that each $H_{i,\lambda}\in U'(\mathfrak{h})$, $F_{i,\lambda}\in U(\mathfrak{n}_-)$ with weight $-\lambda$, and $E_{i,\lambda}\in U(\mathfrak{n}_+)$ with weight $\lambda$. One then defines $T_\gamma:T(U'(\mathfrak{gl}_n))\to T(U'(\mathfrak{gl}_n))$ for $\gamma\in \Delta_+$ by
\begin{equation}\label{eq:T-def}
T_\gamma\Big(\sum_i H_iF_iE_i\Big) = \sum_i H_iF_ie_{-\gamma} e_{\gamma}E_i
\end{equation}
for $H_i\in U'(\mathfrak{h})$, $F_i\in U(\mathfrak{n}_-)$ and $E_i\in U(\mathfrak{n}_+)$. Then $P$ is given recursively by $J_0 = 1$ and \begin{equation}J_\lambda = -\frac{1}{\tilde{h}_\lambda}\sum_{\gamma\in \Delta_+} T_\gamma(J_{\lambda-\gamma})\end{equation} for all $\lambda\in Q^+$, where \begin{equation}\tilde{h}_\lambda = h_\gamma + \frac{(\lambda,\lambda)}{2} + \operatorname{ht}(\lambda),\end{equation} and $h_\lambda = \sum_i m_i A_{ii}^S$ when $\lambda = \sum_i m_i \varepsilon_i$, and by convention $J_\mu=0$ if $\mu\notin Q^+$. We use this to compute several ordering relations for $\mathcal{B}_n$. \\

\begin{proposition} \label{prop: initial rels 1}
The following relations hold in $\mathcal{B}_n$.\begin{align}
\label{rel: 1a} \dprod{B_{12}}{B_{11}} &= \parens{1-\frac{2}{h_{\alpha_1}+2}}\dprod{B_{11}}{B_{12}} \tag{1a}\\
\label{rel: 2} \dprod{B_{22}}{B_{11}} &= \parens{1+\frac{2}{h_{\alpha_1}(h_{\alpha_1}+3)}}\parens{\dprod{B_{11}}{B_{22}}-\frac{1}{h_{\alpha_1}}\dprod{B_{12}}{B_{12}}} \tag{2} \\
\label{rel: 3a} \dprod{B_{23}}{B_{11}} &= \parens{1+\frac{2}{h_{\alpha_1 + \alpha_2}(h_{\alpha_1+\alpha_2}+3)}} \nonumber\\
&\phantom{=}\cdot \parens{\dprod{B_{11}}{B_{23}} - \frac{(h_{\alpha_2}+1)(h_{2\alpha_1+\alpha_2}+2)}{h_{\alpha_1}(h_{\alpha_2}+2)(h_{\alpha_1+\alpha_2}+1)} \dprod{B_{12}}{B_{13}}} \tag{3a} \\
\label{rel: 3b} \dprod{B_{22}}{B_{13}} &= \parens{1 + \frac{2}{h_{\alpha_1}(h_{\alpha_1}+3)}}\bigg(\parens{1 - \frac{2}{h_{\alpha_2}(h_{\alpha_2}+1)}}\dprod{B_{13}}{B_{22}} \nonumber\\
&\phantom{=}+ \parens{1-\frac{1}{h_{\alpha_1 + \alpha_2}+3}}\parens{\frac{1}{h_{\alpha_2}} - \frac{1}{h_{\alpha_1}+1}}\dprod{B_{12}}{B_{23}}\bigg) \tag{3b} \\
\label{rel: 4a} \dprod{B_{13}}{B_{12}} &= \parens{1-\frac{2}{h_{\alpha_1}(h_{\alpha_1}+1)}}\parens{1-\frac{1}{h_{\alpha_2}+2}}\parens{1+\frac{2}{h_{\alpha_1+\alpha_2}(h_{\alpha_1+\alpha_2}+3)}}\dprod{B_{12}}{B_{13}} \nonumber\\
&\phantom{=} + \frac{2(h_{\alpha_2}+1)(h_{\alpha_1+\alpha_2}+1)}{(h_{\alpha_1}+1)h_{\alpha_1+\alpha_2}(h_{\alpha_1+\alpha_2}+3)}\dprod{B_{11}}{B_{23}} \tag{4a} \\
\label{rel: 4b} \dprod{B_{23}}{B_{12}} &= \parens{1+\frac{1}{h_{\alpha_1}(h_{\alpha_1}+3)}}\parens{{\bigg (}1-\frac{1}{h_{\alpha_1 + \alpha_2}+3}}\dprod{B_{12}}{B_{23}} \nonumber \\
&\phantom{=} -\frac{2(h_{\alpha_1+\alpha_2}+2)}{(h_{\alpha_1}+2)(h_{\alpha_2}+1)}\dprod{B_{13}}{B_{22}}{\bigg )} \tag{4b} \\
\label{rel: 5a} \dprod{B_{34}}{B_{12}} &= \parens{1+\frac{1}{(h_{\alpha_1+\alpha_2}+1)(h_{\alpha_1+\alpha_2}+3)}}\parens{1+\frac{1}{(h_{\alpha_1+\alpha_2+\alpha_3}+2)(h_{\alpha_1+\alpha_2+\alpha_3}+4)}} \nonumber\\
&\phantom{=} \cdot{\bigg(} \dprod{B_{12}}{B_{34}}
 - \frac{(h_{\alpha_1}+1)(h_{\alpha_1+\alpha_2}+3)(h_{\alpha_1 + 2\alpha_2 + \alpha_3}+4)}{(h_{\alpha_1}+2)(h_{\alpha_1 + \alpha_2}+2)(h_{\alpha_2}+1)(h_{\alpha_1 + \alpha_2 + \alpha_3} + 3)} \dprod{B_{13}}{B_{24}} \nonumber \\
&\phantom{=} - \frac{(h_{\alpha_1}+1)h_{\alpha_3}(h_{\alpha_1 + \alpha_2 + \alpha_3}+4)(h_{\alpha_1 + 2\alpha_2 + \alpha_3}+4)}{(h_{\alpha_1} + 2)(h_{\alpha_3}+1)(h_{\alpha_1 + \alpha_2 + \alpha_3}+3)(h_{\alpha_1 + \alpha_2}+2)(h_{\alpha_2 + \alpha_3}+2)} \dprod{B_{14}}{B_{23}}{\bigg )} \tag{5a}
\end{align}
\end{proposition}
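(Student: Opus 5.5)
Each relation is a direct computation in the double coset realization $\mathcal{B}_n=(U'(\mathfrak{p}_{2n})/\mathbb{I},\diamond)$ of Proposition \ref{prop: double coset realization isomorphism}. For every product $\dprod{x}{y}$ appearing in \eqref{rel: 1a}--\eqref{rel: 5a} we compute $xPy+\mathbb{I}$ by the recipe of Section \ref{sec: double coset alg general}: act with $P$ on $y+I_+$ in the relative Verma module, multiply on the left by $x$, and reduce modulo $I_-$.

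The key simplification is that the $B_{ij}$ span an abelian ideal of $\mathfrak{p}_{2n}$. Hence in $xPy$ we may move every $e_\gamma\in\mathfrak{n}_+$ of a term $HFE$ of $P$ to the right through $y$, where it dies in $I_+$. We may likewise move every $e_{-\gamma}\in\mathfrak{n}_-$ to the left through $x$, where it dies in $I_-$. Each such move only produces brackets $[A_{kl},B_{ij}]$, which are again $B$'s. Moving the Cartan coefficients $H$ past the $B$'s only shifts the $h$'s by the weights. Therefore every product reduces to $\sum c\,\bbar{B}_{ab}\bbar{B}_{cd}$ with explicit rational coefficients $c\in U'(\mathfrak{h})$. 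Only finitely many terms of $P$ contribute, namely those with $E$ of weight $\lambda$ at most the weight difference that keeps $(\opad E)(y)$ nonzero. We compute the needed $J_\lambda$ from the recursion $J_\lambda=-\tilde h_\lambda^{-1}\sum_\gamma T_\gamma(J_{\lambda-\gamma})$, or equivalently from the factorized form \eqref{eq: extremal proj for Bn}.

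For each relation we compute both sides in the ordered basis $\{\dprod{B_{ab}}{B_{cd}}: ab\le cd\}$ of the relevant weight space, which is a free $U'(\mathfrak{h})$-module by Theorem \ref{thm: generators and ordering relations}. The relation is then obtained by inverting a small triangular matrix.

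As an example, for \eqref{rel: 1a} the weight is $3\varepsilon_1+\varepsilon_2$, spanned by $\bbar B_{11}\bbar B_{12}$ alone. In $\dprod{B_{12}}{B_{11}}$ only $J_0$ and $J_{\alpha_1}=-\frac{1}{h_{\alpha_1}+1}A_{21}A_{12}$ (with the appropriate shift) contribute. We have $A_{12}B_{11}$ vanishing, whereas $\dprod{B_{11}}{B_{12}}$ picks up $[A_{12},B_{12}]$-type terms, $[A_{12},B_{12}]=2B_{11}$. Comparing the two gives the factor $1-\frac{2}{h_{\alpha_1}+2}$ after the $h$-shifts.

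The cases \eqref{rel: 2}, \eqref{rel: 3a}, \eqref{rel: 3b}, \eqref{rel: 4a} and \eqref{rel: 4b} involve two-dimensional weight spaces and only $J_\lambda$ with $\lambda\in\{0,\alpha_1,\alpha_2,\alpha_1+\alpha_2,2\alpha_1,\dots\}$. We compute, for example, both $\dprod{B_{11}}{B_{22}}$ and $\dprod{B_{12}}{B_{12}}$ in terms of the raw products $\bbar B\bbar B$. We then solve for $\dprod{B_{22}}{B_{11}}$ as a $U'(\mathfrak{h})$-combination of them.

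The main obstacle is \eqref{rel: 5a}. Its weight space is three-dimensional, spanned by $B_{12}B_{34}$, $B_{13}B_{24}$ and $B_{14}B_{23}$, and it requires $J_\lambda$ up to $\lambda=\alpha_1+2\alpha_2+\alpha_3$ in $\gl_4$. Carrying the Cartan shifts consistently, including $h\,B=B(h+\mathrm{wt}(B)(h))$, is the delicate bookkeeping step. I would organize it with the factorized $P$ restricted to the roots $\varepsilon_1-\varepsilon_3,\varepsilon_1-\varepsilon_4,\varepsilon_2-\varepsilon_3,\varepsilon_2-\varepsilon_4$ and $\alpha_1,\alpha_3$ that actually act nontrivially. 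I would compute the $3\times3$ transition matrix and invert it, checking the result by specializing $h$ to dominant integral values in a finite-dimensional module.
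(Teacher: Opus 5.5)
Your proposal follows essentially the same route as the paper. You compute each relevant $xPy+\mathbb{I}$ as a $U'(\mathfrak{h})$-combination of raw products $B_{ab}B_{cd}+\mathbb{I}$; the paper organizes this via admissible sequences of positive roots and the $J_\lambda$ recursion. You then solve the resulting systems of at most three equations for the misordered product.

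Two corrections:

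\begin{enumerate}
\item The systems are generally not triangular. For instance, $\dprod{B_{12}}{B_{13}}$ and $\dprod{B_{11}}{B_{23}}$ each involve both $B_{11}B_{23}$ and $B_{12}B_{13}$. So, as the paper does via Cramer's rule, you must solve a full system and check that its determinant is a unit in $U'(\mathfrak{h})$.
\item Taken literally, your $J_{\alpha_1}$ would give the wrong factor. The correct value is $J_{\alpha_1}=-(h_{\alpha_1}+2)^{-1}A_{21}A_{12}$, and this is what yields $1-\frac{2}{h_{\alpha_1}+2}$ in \eqref{rel: 1a}.
\end{enumerate}
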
 
\begin{proof}
We directly compute the diamond product of all relevant pairs, listed here. \begin{align*}
\dprod{B_{12}}{B_{11}} &= B_{12}B_{11} + \mathbb{I} \\
\dprod{B_{11}}{B_{12}} &= B_{11}B_{12} + 2(\tilde{h}_{\alpha_1}-2)^{-1}B_{12}B_{11} + \mathbb{I} \\
\dprod{B_{22}}{B_{11}} &= B_{22}B_{11} + \mathbb{I} \\
\dprod{B_{11}}{B_{22}} &= B_{11}B_{22} +(\tilde{h}_{\alpha_1}-2)^{-1}B_{12}B_{12} + 4(\tilde{h}_{2\alpha_1}-4)^{-1}(\tilde{h}_{\alpha_1}-2)^{-1}B_{22}B_{11} + \mathbb{I} \\
\dprod{B_{12}}{B_{12}} &= B_{12}B_{12} + 4\tilde{h}_{\alpha_1}^{-1}B_{22}B_{11} + \mathbb{I} \\
\dprod{B_{23}}{B_{11}} &= B_{23}B_{11} + \mathbb{I} \\
\dprod{B_{13}}{B_{12}} &= B_{13}B_{12} + 2(\tilde{h}_{\alpha_1}-1)^{-1} B_{23}B_{11} + \mathbb{I} \\
\dprod{B_{11}}{B_{23}} &= B_{11}B_{23} + (\tilde{h}_{\alpha_1}-2)^{-1}B_{12}B_{13} + (\tilde{h}_{\alpha_1 + \alpha_2}-2)^{-1}B_{13}B_{12} \\ &\phantom{=} + (\tilde{h}_{\alpha_1 + \alpha_2}-2)^{-1}(\tilde{h}_{\alpha_1}-2)^{-1}B_{13}B_{12} + 2(\tilde{h}_{2\alpha_1 + \alpha_2} - 4)^{-1}(\tilde{h}_{\alpha_1 + \alpha_2}-2)^{-1}B_{23}B_{11} \\
&\phantom{=} + 2(\tilde{h}_{2\alpha_1 + \alpha_2}-4)^{-1}(\tilde{h}_{\alpha_1}-2)^{-1}B_{23}B_{11} \\ &\phantom{=} + 2(\tilde{h}_{2\alpha_1 + \alpha_2}-4)^{-1}(\tilde{h}_{\alpha_1 + \alpha_2}-2)^{-1}(\tilde{h}_{\alpha_1}-2)^{-1}B_{23}B_{11} + \mathbb{I} \\
\dprod{B_{12}}{B_{13}} &= B_{12}B_{13} + (\tilde{h}_{\alpha_2} - 1)^{-1} B_{13}B_{12} + 2(\tilde{h}_{\alpha_1 + \alpha_2}-1)^{-1} B_{23}B_{11} \\ &\phantom{=} + 2(\tilde{h}_{\alpha_1 + \alpha_2}-1)^{-1}(\tilde{h}_{\alpha_2}-1)^{-1} B_{23}B_{11} + \mathbb{I} \\
\dprod{B_{23}}{B_{12}} &= B_{23}B_{12} + \mathbb{I} \\
\dprod{B_{22}}{B_{13}} &= B_{22}B_{13} + (\tilde{h}_{\alpha_2}-2)^{-1}B_{23}B_{12} + \mathbb{I} \\
\dprod{B_{13}}{B_{22}} &= B_{13}B_{22} + (\tilde{h}_{\alpha_1}-1)^{-1}B_{23}B_{12} + \mathbb{I} \\
\dprod{B_{12}}{B_{23}} &= B_{12}B_{23} + 2\tilde{h}_{\alpha_1}^{-1}B_{22}B_{13} + 2(\tilde{h}_{\alpha_2}-1)^{-1}B_{13}B_{22} + (\tilde{h}_{\alpha_1 + \alpha_2}-1)^{-1}(1 + 2\tilde{h}_{\alpha_1}^{-1} + 2(\tilde{h}_{\alpha_2}-1)^{-1}) B_{23}B_{12}\mathbb{I} \\
\dprod{B_{34}}{B_{12}} &= B_{34}B_{12} + \mathbb{I} \\
\dprod{B_{12}}{B_{34}} &= B_{12}B_{34} + (\tilde{h}_{\alpha_2}-1)^{-1} B_{13}B_{24} + (\tilde{h}_{\alpha_1 + \alpha_2}-1)^{-1}(1+(\tilde{h}_{\alpha_2}-1)^{-1}) B_{23}B_{14} \\ &\phantom{=} + (\tilde{h}_{\alpha_2 + \alpha_3}-1)^{-1}(1+(\tilde{h}_{\alpha_2}-1)^{-1}) B_{14}B_{23} \\ &\phantom{=} + (\tilde{h}_{\alpha_1 + \alpha_2 + \alpha_3}-1)^{-1}(1+((\tilde{h}_{\alpha_1 + \alpha_2}-1)^{-1}+(\tilde{h}_{\alpha_2 + \alpha_3}-1)^{-1})(1+(\tilde{h}_{\alpha_2}-1)^{-1})) B_{24}B_{13} \\ &\phantom{=} + (\tilde{h}_{\alpha_1 + 2\alpha_2 + \alpha_3}-2)^{-1}((\tilde{h}_{\alpha_1 + \alpha_2 + \alpha_3}-1)^{-1} + (\tilde{h}_{\alpha_2}-1)^{-1} \\ &\phantom{=} + (1+(\tilde{h}_{\alpha_1 + \alpha_2 + \alpha_3}-1)^{-1})((\tilde{h}_{\alpha_2 + \alpha_3}-1)^{-1} + (\tilde{h}_{\alpha_1 + \alpha_2}-1)^{-1})(1+(\tilde{h}_{\alpha_2}-1)^{-1})) B_{34}B_{12} + \mathbb{I} \\
\dprod{B_{13}}{B_{24}} &= B_{13}B_{24} + (\tilde{h}_{\alpha_1}-1)^{-1} B_{23}B_{14} + (\tilde{h}_{\alpha_3}-1)^{-1} B_{14}B_{23} \\
&\phantom{=} + (\tilde{h}_{\alpha_1 + \alpha_3}-2)^{-1}((\tilde{h}_{\alpha_1}-1)^{-1} + (\tilde{h}_{\alpha_3}-1)^{-1})B_{24}B_{13} \\
&\phantom{=} + (\tilde{h}_{\alpha_1 + \alpha_2 + \alpha_3}-1)^{-1}(1 + (1+(\tilde{h}_{\alpha_1 + \alpha_3}-2)^{-1})((\tilde{h}_{\alpha_1}-1)^{-1} + (\tilde{h}_{\alpha_3}-1)^{-1})B_{34}B_{12} + \mathbb{I} \\ 
\dprod{B_{14}}{B_{23}} &= B_{14}B_{23} + (\tilde{h}_{\alpha_1}-1)^{-1}B_{24}B_{13} + (\tilde{h}_{\alpha_1 + \alpha_2}-1)^{-1}(1+(\tilde{h}_{\alpha_1}-1)^{-1})B_{34}B_{12} + \mathbb{I}
\end{align*} This gives several systems of at most three equations that can be solved for misordered diamond products. 
As an example, we work through the computation for the relation of $\dprod{B_{13}}{B_{12}}$. The weight of this pair is $(\varepsilon_1 + \varepsilon_3) + (\varepsilon_1 + \varepsilon_2) = 2\varepsilon_1 + \varepsilon_2 + \varepsilon_3$. We compute the diamond products for other pairs of the same weight, in the desired order; that is, $\dprod{B_{12}}{B_{13}}$ and $\dprod{B_{11}}{B_{23}}$. For each pair $\dprod{x}{y}$, this amounts to finding all possible sequences of positive roots $\gamma_1,\cdots, \gamma_n$ for which $\opad e_{\gamma_n}\cdots \opad e_{\gamma_1} (y)$ and $\opad e_{-\gamma_n}\cdots \opad e_{-\gamma_1} (x)$ are both nonzero. We call such a sequence \textit{admissible}. \\
The product $\dprod{B_{13}}{B_{12}}$ is straightforward, since the only admissible sequences are $\varnothing$ and $\alpha_1$. This gives \[\dprod{B_{13}}{B_{12}} = B_{13}(1 - \tilde{h}_{\alpha_1}^{-1}e_{-\alpha_1}e_{\alpha_1})B_{12} + \mathbb{I}\] Ideally every term should be simplified, so that an element of $U'(\mathfrak{h})$ occurs on the left, followed by a quadratic term in $B's$. In this case, \[\dprod{B_{13}}{B_{12}} = B_{12}B_{13} + 2(\tilde{h}_{\alpha_1}-1)^{-1}B_{11}B_{23}.\]
Slightly more involved, we repeat this process for $\dprod{B_{12}}{B_{13}}$. The admissible sequences of positive roots are $\varnothing$; $\alpha_2$; $\alpha_1 + \alpha_2$; and $\alpha_2,\alpha_1$. Then we have \[\dprod{B_{12}}{B_{13}} = B_{12}(1 - \tilde{h}_{\alpha_2}^{-1} e_{-\alpha_2}e_{\alpha_2} - \tilde{h}_{\alpha_1 + \alpha_2}^{-1}e_{-(\alpha_1 + \alpha_2)}e_{\alpha_1 + \alpha_2} + \tilde{h}_{\alpha_1+\alpha_2}^{-1}\tilde{h}_{\alpha_2}^{-1}e_{-\alpha_2}e_{-\alpha_1}e_{\alpha_1}e_{\alpha_2})B_{13} + \mathbb{I}\] Reordering each term and consolidating, \[\dprod{B_{12}}{B_{13}} = (1+(\tilde{h}_{\alpha_2}-1)^{-1})(B_{12}B_{13} + 2(\tilde{h}_{\alpha_1 + \alpha_2}-1)^{-1}B_{11}B_{23}) + \mathbb{I}.\]
Lastly, consider $\dprod{B_{11}}{B_{23}}$. The admissible sequences of positive roots are \[\varnothing;\; \alpha_1;\; \alpha_1+\alpha_2;\; \alpha_1,\alpha_2;\; \alpha_1 + \alpha_2,\alpha_1;\; \alpha_1,\alpha_1+\alpha_2;\; \alpha_1,\alpha_2,\alpha_1.\] The product is then given by \begin{align*}
\dprod{B_{11}}{B_{23}} &= B_{11}(1 - \tilde{h}_{\alpha_1}^{-1}e_{-\alpha_1}e_{\alpha_1} - \tilde{h}_{\alpha_1 + \alpha_2}^{-1}(e_{-(\alpha_1+\alpha_2)}e_{\alpha_1 + \alpha_2} - \tilde{h}_{\alpha_1}^{-1}e_{-\alpha_1}e_{-\alpha_2}e_{\alpha_2}e_{\alpha_1}) \\
&\phantom{=} -\tilde{h}_{2\alpha_1 + \alpha_2}^{-1}(-\tilde{h}_{\alpha_1+\alpha_2}^{-1}(e_{-(\alpha_1 + \alpha_2)}e_{-\alpha_1}e_{\alpha_1}e_{\alpha_1+\alpha_2} - \tilde{h}_{\alpha_1}e_{-\alpha_1}e_{-\alpha_2}e_{-\alpha_1}e_{\alpha_1}e_{\alpha_2}e_{\alpha_1}) \\ &\phantom{=} \phantom{-\tilde{h}_{2\alpha_1+\alpha_2}^{-1}(}-\tilde{h}_{\alpha_1}^{-1}e_{-\alpha_1}e_{-(\alpha_1 + \alpha_2)}e_{\alpha_1 + \alpha_2}e_{\alpha_1}))B_{23} + \mathbb{I}
\end{align*}
Equivalently, \begin{align*}
\dprod{B_{11}}{B_{23}} &= \parens{1 + 2(\tilde{h}_{2\alpha_1 + \alpha_2}-4)^{-1}\parens{(\tilde{h}_{\alpha_1 +\alpha_2}-2)^{-1}(1+(\tilde{h}_{\alpha_1}-2)^{-1}) + (\tilde{h}_{\alpha_1}-2)^{-1}}}B_{11}B_{23} \\
&\phantom{=} +\parens{(\tilde{h}_{\alpha_1}-2)^{-1} + (\tilde{h}_{\alpha_1 +\alpha_2}-2)^{-1}(1+(\tilde{h}_{\alpha_1}-2)^{-1})}B_{12}B_{13} + \mathbb{I}
\end{align*}
We restate these products in a simplified form: \begin{align*}
\dprod{B_{13}}{B_{12}} &= \frac{2}{h_{\alpha_1}+1} B_{11}B_{23} + B_{12}B_{13} + \mathbb{I} \\
\dprod{B_{12}}{B_{13}} &= \parens{1+\frac{1}{h_{\alpha_2}+1}}\parens{\frac{2}{h_{\alpha_1+\alpha_2}+2}B_{11}B_{23} + B_{12}B_{13}} + \mathbb{I} \\
\dprod{B_{11}}{B_{23}} &= \parens{1 + \frac{2}{h_{\alpha_1}(h_{\alpha_1 + \alpha_2}+1)}}B_{11}B_{23} + \frac{h_{2\alpha_1 + \alpha_2}+2}{h_{\alpha_1}(h_{\alpha_1 + \alpha_2}+1)}B_{12}B_{13} +\mathbb{I} \\
\end{align*} For the sake of illustration, we will write this as \begin{align*}
\dprod{B_{13}}{B_{12}} &= x_1 B_{11}B_{23} + x_2 B_{12}B_{13}\\
\dprod{B_{12}}{B_{13}} &= y_1 B_{11}B_{23} + y_2 B_{12}B_{13} \\
\dprod{B_{11}}{B_{23}} &= z_1 B_{11}B_{23} + z_2 B_{12}B_{13} 
\end{align*} As long as $y_1z_2-y_2z_1$ is invertible in $U'(\mathfrak{h})$, we can apply Cramer's rule to obtain \[\dprod{B_{13}}{B_{12}} = \frac{1}{y_1z_2-y_2z_1}\parens{(x_1z_2-x_2z_1)\dprod{B_{12}}{B_{13}} - (x_1y_2-x_2y_1)\dprod{B_{11}}{B_{23}}}\] This is indeed the case, since \[y_1z_2-y_2z_1 = -\frac{(h_{\alpha_2}+1)(h_{\alpha_1 + \alpha_2} + 1)(h_{\alpha_1 + \alpha_2} + 2)}{(h_{\alpha_2}+2)h_{\alpha_1 + \alpha_2}(h_{\alpha_1 + \alpha_2} + 3)}\] The other relations are obtained by solving similar systems of equations. \\
\end{proof}

From these seven ordering relations, we obtain five more using only Zhelobenko automorphisms.
\begin{proposition} \label{prop: initial rels 2}
The following relations hold in $\mathcal{B}_n$.
\begin{align}
\label{rel: 1b} \dprod{B_{22}}{B_{12}} &= \parens{1-\frac{2}{h_{\alpha_1}+4}}\dprod{B_{12}}{B_{22}} \tag{1b}\\
\label{rel: 3c} \dprod{B_{33}}{B_{12}} &= \parens{1 + \frac{2}{(h_{\alpha_1 + \alpha_2}+1)(h_{\alpha_1 + \alpha_2}+4)}}\bigg(\dprod{B_{12}}{B_{33}} \nonumber\\ &\phantom{=}- \parens{1-\frac{1}{h_{\alpha_1} + 2}}\parens{1 + \frac{1}{h_{\alpha_2} + 1}}\parens{\frac{1}{h_{\alpha_2}+2}+\frac{1}{h_{\alpha_1 + \alpha_2}+2}}\dprod{B_{13}}{B_{23}}\bigg) \tag{3c} \\
\label{rel: 4c} \dprod{B_{23}}{B_{13}} &= \parens{1+\frac{2}{(h_{\alpha_1+\alpha_2}+1)(h_{\alpha_1+\alpha_2}+4)}}\bigg(\parens{1-\frac{1}{h_{\alpha_1}+2}} \nonumber \\
&\phantom{=}\cdot\parens{1-\frac{2}{(h_{\alpha_2}+1)(h_{\alpha_2}+2)}}\dprod{B_{13}}{B_{23}} + \frac{2(h_{\alpha_1}+1)}{(h_{\alpha_2}+2)(h_{\alpha_1 + \alpha_2}+3)}\dprod{B_{12}}{B_{33}}\bigg) \tag{4c} \\
\label{rel: 5b} \dprod{B_{24}}{B_{13}} &= \parens{1 + \frac{1}{h_{\alpha_1}(h_{\alpha_1}+2)}}\parens{1 + \frac{1}{(h_{\alpha_1 + \alpha_2 + \alpha_3}+2)(h_{\alpha_1 + \alpha_2 + \alpha_3}+4)}} \nonumber \\
&\phantom{=}\cdot\bigg(\parens{1-\frac{1}{(h_{\alpha_2}+1)^2}}\dprod{B_{13}}{B_{24}} \nonumber \\
&\phantom{=} + \frac{(h_{\alpha_1}+2)(h_{\alpha_1+\alpha_2} + 2)(h_{\alpha_1 + \alpha_3}+2)}{(h_{\alpha_1}+1)(h_{\alpha_1+\alpha_2} + 3)(h_{\alpha_2}+1)(h_{\alpha_1 + \alpha_2 + \alpha_3}+3)}\dprod{B_{12}}{B_{34}} \nonumber \\ 
&\phantom{=} - \frac{(h_{\alpha_2}+2)(h_{\alpha_1 + \alpha_2}+2)(h_{\alpha_2 + \alpha_3}+1)(h_{\alpha_1+\alpha_2+\alpha_3}+4)(h_{\alpha_1 + \alpha_3}+2)}{(h_{\alpha_2}+1)(h_{\alpha_1 + \alpha_2}+3)(h_{\alpha_2 + \alpha_3}+2)(h_{\alpha_1+\alpha_2+\alpha_3}+3)(h_{\alpha_1}+1)(h_{\alpha_3}+1)}\dprod{B_{14}}{B_{23}}\bigg) \tag{5b} \\
\label{rel: 5c} \dprod{B_{23}}{B_{14}} &= \parens{1 + \frac{1}{h_{\alpha_1}(h_{\alpha_1}+2)}}\parens{1 + \frac{1}{(h_{\alpha_1+\alpha_2}+1)(h_{\alpha_1+\alpha_2}+3)}}
\nonumber \\
&\phantom{=} \cdot\bigg(\parens{1-\frac{1}{(h_{\alpha_2+\alpha_3}+2)^2}}\parens{1-\frac{1}{(h_{\alpha_3}+1)^2}}\dprod{B_{14}}{B_{23}} \nonumber\\ &\phantom{=} + \frac{(h_{\alpha_1}+2)(h_{\alpha_3}+2)(h_{\alpha_1+\alpha_2+\alpha_3}+3)(h_{\alpha_1}-h_{\alpha_3})}{(h_{\alpha_1}+1)(h_{\alpha_3}+1)(h_{\alpha_1+\alpha_2+\alpha_3}+4)(h_{\alpha_2 + \alpha_3}+2)(h_{\alpha_1 + \alpha_2}+2)}\dprod{B_{12}}{B_{34}} \nonumber\\
&\phantom{=} + \frac{h_{\alpha_2}(h_{\alpha_1 + \alpha_2}+3)(h_{\alpha_2+\alpha_3}+3)(h_{\alpha_1 + \alpha_2 + \alpha_3}+3)(h_{\alpha_1}-h_{\alpha_3})}{(h_{\alpha_2}+1)(h_{\alpha_1 + \alpha_2}+2)(h_{\alpha_2+\alpha_3}+2)(h_{\alpha_1 + \alpha_2 + \alpha_3}+4)(h_{\alpha_1}+1)(h_{\alpha_3}+1)}\dprod{B_{13}}{B_{24}} \bigg) \tag{5c}
\end{align}
\end{proposition}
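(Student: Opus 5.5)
Each relation of this proposition will be obtained by applying a single Zhelobenko automorphism $\breve{q}_i$, or a short composite of them, to one of the relations of Proposition \ref{prop: initial rels 1}, using Lemma \ref{lem: zhel auto Bn} for the images of the generators and of $U'(\mathfrak{h})$. Since $\breve{q}_i$ is an algebra automorphism of $(U'(\mathfrak{p}_{2n})/\mathbb{I},\diamond)$, it maps $\bbar{x}\diamond\bbar{y}$ to $\breve{q}_i(\bbar{x})\diamond\breve{q}_i(\bbar{y})$. Each generator $\bbar{B}_{k\ell}$ is sent to a rational function of $h_i$ (placed on the left) times $\tau_i(B_{k\ell})+\mathbb{I}$. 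A coefficient $f(h)\in U'(\mathfrak{h})$ is sent to $f(s_i h)$, where $s_i$ is the Weyl reflection; this follows from Lemma \ref{lem: zhel auto Bn}(2) and (4), since $h_i\mapsto -h_i-2$ after absorbing the factor, and $I_i$ is fixed. Rational factors sitting between the two diamond factors are moved to the left using the weight relations $\bbar{x}\diamond f(h)=f(h-\mathrm{wt}(x))\diamond\bbar{x}$.

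The assignments are as follows.
\begin{itemize}
\item[(1b)] Apply $\breve{q}_1$ to (1a). Here $\tau_1$ swaps the indices $1,2$, so $B_{12}\mapsto -B_{12}$ and $B_{11}\mapsto B_{22}$. This gives a relation between $\dprod{B_{12}}{B_{22}}$ and $\dprod{B_{22}}{B_{12}}$, which is then solved for the misordered product.
\item[(3c)] Apply $\breve{q}_2$ to (2)-type data, or $\breve{q}_2\breve{q}_1$ to (3a). I expect the second choice: $\breve{q}_2$ applied to (3b) or (3a) yields $B_{33}$ and $B_{12}$. Concretely, $\tau_2$ applied to (3a) sends $B_{23}\mapsto B_{23}$ up to sign, which is not useful. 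So I would instead use $\breve{q}_1\breve{q}_2$ on (3a): $\tau_1\tau_2$ sends the indices $1\to 2\to 3$ and $3\to 1$, hence $B_{11}\to B_{33}$ and $B_{23}\to B_{12}$ (up to sign), while $B_{12}\to B_{23}$ and $B_{13}\to B_{13}$.
\item[(4c)] Apply the same map to (4a). This sends $\{B_{12},B_{13},B_{11},B_{23}\}$ to $\{B_{23},B_{13},B_{33},B_{12}\}$, giving a relation for $\dprod{B_{23}}{B_{13}}$, after reordering.
\item[(5b), (5c)] Apply $\breve{q}_2$, respectively $\breve{q}_3\breve{q}_2$, to (5a), so that the index permutations carry $B_{34},B_{12}$ to $B_{24},B_{13}$, respectively to $B_{23},B_{14}$.
\end{itemize}

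The preliminary bookkeeping step is therefore to choose, for each target pair, a permutation in $S_4$ realizable as a short word in the $\tau_i$ whose action, by the rule stated before Lemma \ref{lem: zhel auto Bn}, carries the weight class of a known relation onto the target weight class.

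After transport, the image relation generally expresses a correctly ordered product in terms of products some of which are now misordered relative to the order $\bbar{B}_{ij}<\bbar{B}_{k\ell}$. For example, in (5b) the image of $\dprod{B_{13}}{B_{24}}$ under $\tau_2$ is $\dprod{B_{12}}{B_{34}}$, which is ordered, whereas the image of $\dprod{B_{12}}{B_{34}}$ is $\dprod{B_{13}}{B_{24}}$.

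I then rewrite the transported identity as a linear system over $U'(\mathfrak{h})$ in the two or three products of the given weight. Where needed, relations already in hand (including (1b) and (3c) once proven, and the images of (5a) under other words) eliminate the unwanted misordered terms. Finally, as in the proof of Proposition \ref{prop: initial rels 1}, I apply Cramer's rule to isolate the misordered product, checking that the relevant determinant is a product of invertible factors of the form $h_\gamma+k$.

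The main obstacle is precisely this ordering issue: a Zhelobenko automorphism need not respect the order, so a single transported relation may not directly have the required form. The weights in the final coefficients (e.g.\ $h_{\alpha_1}-h_{\alpha_3}$ in (5c)) suggest combining two transported relations.

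The remaining work is routine algebra: tracking the signs from $\tau_i$, the factors $\frac{h_i+1}{h_i}$, $\frac{h_i+2}{h_i}$ and $\frac{h_i+1}{h_i-1}$ from Lemma \ref{lem: zhel auto Bn}, the shifts $h_\gamma\mapsto h_{s_i\gamma}$, and the weight shifts, and then simplifying to the stated factored forms.
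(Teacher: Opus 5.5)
Your treatment of (1b), (5b) and (5c) matches the paper's. The paper applies $\breve q_1$ to (1a), $\breve q_2$ to (5a), and $\breve q_3$ to (5b), which is your $\breve q_3\breve q_2$ on (5a). In each case every product on the right stays correctly ordered, so the relation can be read off directly.

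The step for (3c) and (4c), however, is wrong as written. Use the same composition convention that makes your $\breve q_3\breve q_2$ work for (5c). Under it, the word $\breve q_1\breve q_2$ acts on indices by the $3$-cycle $1\mapsto 2\mapsto 3\mapsto 1$. It sends $B_{11}$ to $B_{22}$, not $B_{33}$. It carries (3a) and (4a) into the weight space $\varepsilon_1+2\varepsilon_2+\varepsilon_3$ of (3b)/(4b), so it never reaches (3c) or (4c).

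The images you actually list ($B_{11}\mapsto B_{33}$, $B_{23}\mapsto B_{12}$, $B_{12}\mapsto B_{23}$, $B_{13}\mapsto B_{13}$) belong to the transposition $1\leftrightarrow 3$, i.e.\ to the length-three word $\breve q_1\breve q_2\breve q_1$. Under that map, both transported relations contain the two misordered products $\dprod{B_{33}}{B_{12}}$ and $\dprod{B_{23}}{B_{13}}$. You would then have to solve a coupled $2\times 2$ system over $U'(\mathfrak{h})$, which you have neither set up nor shown to be uniquely solvable. The composite you mention first, $\breve q_2\breve q_1$ (indices $1\mapsto 3$, $2\mapsto 1$, $3\mapsto 2$), does work: (3a) goes to a relation in which only $\dprod{B_{33}}{B_{12}}$ is misordered, and the image of (4a) becomes usable after substituting (3c).

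The simplest fix is the option you raise in passing but then test only on (3a): apply $\breve q_2$ to (3b) and (4b). This is the paper's proof. Since $\tau_2$ just swaps the indices $2$ and $3$, it sends $\dprod{B_{22}}{B_{13}}$ to $\dprod{B_{33}}{B_{12}}$, with right-hand side involving $\dprod{B_{12}}{B_{33}}$ and $\dprod{B_{13}}{B_{23}}$. It sends $\dprod{B_{23}}{B_{12}}$ to $\dprod{B_{23}}{B_{13}}$, with right-hand side involving $\dprod{B_{13}}{B_{23}}$ and $\dprod{B_{12}}{B_{33}}$. All of these are ordered, up to signs and invertible factors.

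So every relation in the proposition is the direct image of a single earlier relation. The ordering obstacle you anticipate never arises, and no Cramer's-rule elimination is needed. In particular, the factor $h_{\alpha_1}-h_{\alpha_3}$ in (5c) does not signal a combination of two relations. It is the image of the factor $h_{\alpha_1+\alpha_3}+2$ of (5b) under $\breve q_3$, which fixes $h_{\alpha_1}$ and sends $h_{\alpha_3}\mapsto -h_{\alpha_3}-2$. That is the shifted action, as your own formula $h_i\mapsto -h_i-2$ shows, not $f\mapsto f(s_ih)$ for the linear reflection.
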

\begin{proof}
The relation \eqref{rel: 1b} is obtained by applying $\breve{q}_1$ to the relation \eqref{rel: 1a} from Proposition \ref{prop: initial rels 1}. We show this case in detail. Lemma \ref{lem: zhel auto Bn} provides a summary of the behavior of $\breve{q}_1$. In particular, \begin{equation}
\breve{q}_1(\bbar{B}_{12}) = -\frac{h_1+2}{h_1}B_{12} + \mathbb{I}, \quad \breve{q}_1(\bbar{B}_{11}) = \bbar{B}_{22},\quad \breve{q}_1(h_1) = -(h_1 + 2).
\end{equation}Applying this to relation \eqref{rel: 1a} we obtain \begin{equation}
\parens{-\frac{h_1+2}{h_1}B_{12} + \mathbb{I}}\diamond (\bbar{B}_{22}) = \frac{h_1 + 2}{h_1}(\bbar{B}_{22})\diamond \parens{-\frac{h_1+2}{h_1}B_{12} + \mathbb{I}}
\end{equation} Consolidating elements of $U'(\mathfrak{h})$ to the left, we have \begin{equation}
-\frac{h_1 + 2}{h_1}\dprod{B_{12}}{B_{22}} = -\frac{(h_1 + 2)(h_1 + 4)}{h_1(h_1 + 2)}\dprod{B_{22}}{B_{12}} 
\end{equation}
Lastly we isolate the misordered term to obtain the ordering relation \begin{equation}
\dprod{B_{22}}{B_{12}} = \parens{1-\frac{2}{h_{\alpha_1}+4}}\dprod{B_{12}}{B_{22}}
\end{equation}
Similarly, applying $\breve{q}_2$ to relations \eqref{rel: 3b}, \eqref{rel: 4b} and \eqref{rel: 5a} give relations \eqref{rel: 3c}, \eqref{rel: 4c}, and \eqref{rel: 5b}, respectively. Applying $\breve{q}_3$ to \eqref{rel: 5b} gives \eqref{rel: 5c}.
\end{proof}

We refer to the relations listed in Propositions \ref{prop: initial rels 1} and \ref{prop: initial rels 2} as \textit{initial relations}. They are organized by type based on the misordered term on the left-hand side. The following table clarifies each type. For $i < j < k < \ell$:
\renewcommand{\arraystretch}{1.2}
\begin{table}[ht!]
\centering
\begin{tabular}{|c|c|}
\hline
Misordered Term & Type \\
\hline
$\dprod{B_{ij}}{B_{ii}}$ & \eqref{rel: 1a} \\
\hline
$\dprod{B_{jj}}{B_{ij}}$ & \eqref{rel: 1b} \\
\hline
$\dprod{B_{jj}}{B_{ii}}$ & \eqref{rel: 2} \\
\hline
$\dprod{B_{jk}}{B_{ii}}$ & \eqref{rel: 3a} \\
\hline
$\dprod{B_{jj}}{B_{ik}}$ & \eqref{rel: 3b} \\
\hline
$\dprod{B_{kk}}{B_{ij}}$ & \eqref{rel: 3c} \\
\hline
$\dprod{B_{ik}}{B_{ij}}$ & \eqref{rel: 4a} \\
\hline
$\dprod{B_{jk}}{B_{ij}}$ & \eqref{rel: 4b} \\
\hline
$\dprod{B_{jk}}{B_{ik}}$ & \eqref{rel: 4c} \\
\hline
$\dprod{B_{kl}}{B_{ij}}$ & \eqref{rel: 5a} \\
\hline
$\dprod{B_{jl}}{B_{ik}}$ & \eqref{rel: 5b} \\
\hline
$\dprod{B_{jk}}{B_{il}}$ & \eqref{rel: 5c} \\
\hline
\end{tabular}
\caption{Types of Ordering Relations in $\mathcal{B}_n$ $(i < j < k < \ell)$}
\label{tab: rel types}
\end{table}
\renewcommand{\arraystretch}{1}

\begin{theorem} \label{thm: All relations for Bn}
All ordering relations of $\mathcal{B}_n$ can be obtained from initial relations via Zhelobenko automorphisms. Specifically, for $i < j < k < l$ the relation that corresponds to permuting the indices of $B$'s by $1\mapsto i$, $2\mapsto j$, $3\mapsto k$, $4\mapsto\ell$ can be done directly, with the only other difference being the substitution \begin{align*}
h_{\alpha_1} &\mapsto A_{ii} - A_{jj} + (j-i - 1) \\
h_{\alpha_2} &\mapsto A_{jj} - A_{kk} + (k-j - 1) \\
h_{\alpha_3} &\mapsto A_{kk} - A_{\ell\ell} + (\ell-k - 1)
\end{align*}
\end{theorem}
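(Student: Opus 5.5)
The plan is to realize the passage from indices $(1,2,3,4)$ to $(i,j,k,\ell)$ by a word in the Zhelobenko automorphisms $\breve{q}_m$, chosen so that every generator in the relation is carried to another generator times an invertible element of $U'(\mathfrak{h})$, and the order of every product is preserved. The type $A_{n-1}$ braid group acts on index labels through the symmetric group: by the rule for $\tau_m$, each $\tau_m$ swaps $m$ and $m+1$ up to sign. Choose a permutation $w\in S_n$ with $w(1)=i$, $w(2)=j$, $w(3)=k$, $w(4)=\ell$ which is increasing on $\{1,2,3,4\}$. Take a reduced word for $w$ in which every step moves a label $p$ past an adjacent label $p+1$ that does not occur among the indices currently present in the relation. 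Such a word exists: move $4$ to $\ell$ by successive transpositions $s_4,\dots,s_{\ell-1}$, then move $3$ to $k$, then $2$ to $j$, then $1$ to $i$. Each step is an application of $\breve{q}_p$ in which index $p$ appears in the relation but $p+1$ does not.

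The key step is to analyse one such step using Lemma~\ref{lem: zhel auto Bn}. If $p$ occurs and $p+1$ does not, every generator $B_{ab}$ in the relation with an index $p$ has the form $B_{pb}$ or $B_{pp}$ with $b\neq p+1$. These fall in case~4, so $\breve{q}_p(\bbar{B}_{pb})=\tau_p(B_{pb})+\mathbb{I}=\pm\bbar{B}_{p+1,b}$, and $\bbar{B}_{pp}\mapsto\bbar{B}_{p+1,p+1}$. Generators not involving $p$ or $p+1$ are fixed. So each generator maps to $\pm$ a generator with no $U'(\mathfrak{h})$ factor. The signs are uniform: each diamond product in a relation of a given type has the same number of $p$'s, so it picks up the same overall sign, and the signs cancel.

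Since the positions of the indices relative to one another are unchanged ($p+1$ was absent and the other indices are not between $p$ and $p+1$), the lexicographic order of each product is preserved. Thus the misordered term stays misordered and the ordered terms stay ordered. Weight compatibility also means that $\breve{q}_p$ acting on $U'(\mathfrak{h})$ is the shifted Weyl action $h\mapsto s_p\cdot h$. Since $\breve{q}_p$ is an algebra automorphism and coefficients commute past generators only up to weight shifts, we then track how the coefficients transform.

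The main obstacle is the coefficient bookkeeping: showing that the cumulative action on $h_{\alpha_1},h_{\alpha_2},h_{\alpha_3}$ is exactly the stated substitution. For $\breve{q}_p$ one has $\breve{q}_p(A_{pp})=A_{p+1,p+1}-1$ or a similar shift, coming from $\breve{q}_p(h_p)=-(h_p+2)$ together with $I_p$ being fixed (case~4). Composing these along the chosen word, $A_{aa}$ for the moved label is sent to $A_{w(a)w(a)}$ minus the number of steps taken, that is, minus $w(a)-a$. So $h_{\alpha_1}=A_{11}-A_{22}$ becomes $A_{ii}-A_{jj}-(i-1)+(j-2)=A_{ii}-A_{jj}+(j-i-1)$, and similarly for $\alpha_2,\alpha_3$. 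I would verify the exact shift convention carefully in a single step, using $\breve{q}_p(h)=\tau_p(h)$ adjusted by $\rho$, and confirm it on relation \eqref{rel: 1a} with $n=3$.

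Finally, for completeness I would argue that every pair of generators falls into exactly one row of Table~\ref{tab: rel types} after relabeling. The possible index coincidence patterns of two sorted pairs from $\{i\le j\}$ are precisely the twelve listed, together with the already-ordered cases. Hence these relations, by Theorem~\ref{thm: generators and ordering relations}, give all ordering relations.
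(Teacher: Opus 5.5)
Your proposal is correct and follows essentially the same route as the paper. The paper uses the same composite $\breve{q}_{i-1}\cdots\breve{q}_1\,\breve{q}_{j-1}\cdots\breve{q}_2\,\breve{q}_{k-1}\cdots\breve{q}_3\,\breve{q}_{\ell-1}\cdots\breve{q}_4$, invokes case 4 of Lemma~\ref{lem: zhel auto Bn} so that $\bbar{B}_{mn}\mapsto\tau_p(B_{mn})+\mathbb{I}$ whenever $m,n\neq p+1$, and tracks coefficients via $\breve{q}_{j-1}\cdots\breve{q}_i(A_{ii})=A_{jj}-(j-i)$ and $\breve{q}_m(A_{aa})=A_{aa}$ for $a\notin\{m,m+1\}$. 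Your sign-cancellation argument (all terms share one weight) and your check that the twelve index-coincidence patterns exhaust the misordered pairs make explicit details the paper leaves implicit.
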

\begin{proof} An important result of \ref{lem: zhel auto Bn} is that $\breve{q}_i(\bbar{B}_{mn}) = \tau(B_{mn}) + \mathbb{I}$ whenever $m,n\neq i+1$. Consequently, as long as $m,n \leq 4$ and $i<j<k<\ell$, we have \begin{equation}
\breve{q}_{i-1}\cdots \breve{q}_1 \breve{q}_{j-1}\cdots \breve{q}_{2}\breve{q}_{k-1}\cdots\breve{q}_{3}\breve{q}_{\ell-1}\cdots\breve{q}_4(\bbar{B}_{mn}) = \tau_{i-1}\cdots\tau_1 \tau_{j-1}\cdots \tau_{2}\tau_{k-1}\cdots\tau_{3}\tau_{\ell-1}\cdots\tau_4(B_{mn}) + \mathbb{I}
\end{equation}
Applying this map to an initial relation therefore has the effect of permuting the indices of $B$'s as claimed. The only thing to check is the effect of this map on elements of $U'(\mathfrak{h})$. By Lemma \ref{lem: zhel auto Bn} we find that for $i < j$, \[\breve{q}_{j-1}\cdots \breve{q}_i(A_{ii}) = A_{jj} - (j-i).\] We also have that for $j\neq i,i+1$, \[\breve{q}_j(A_{ii}) = A_{ii}.\] Combining these facts it is straightforward to see the claim holds.
\end{proof}

\begin{example} \label{ex: all relations example}
For $n\geq 6$, the ordering relation for $\dprod{B_{66}}{B_{33}}$ is of type \eqref{rel: 2}, and has initial relation
\[
\dprod{B_{22}}{B_{11}} = \parens{1+\frac{2}{h_{\alpha_1}(h_{\alpha_1}+3)}}\parens{\dprod{B_{11}}{B_{22}}-\frac{1}{h_{\alpha_1}}\dprod{B_{12}}{B_{12}}} \]

Then the ordering relation for $\dprod{B_{66}}{B_{33}}$ is obtained by applying $\breve{q}_2\breve{q}_1\breve{q}_5\breve{q}_4\breve{q}_3\breve{q}_2$ to the initial relation, giving
\[
\dprod{B_{66}}{B_{33}} = \parens{1+\frac{2}{(h_{\alpha_3+\alpha_4+\alpha_5}+2)(h_{\alpha_3+\alpha_4+\alpha_5}+5)}}\parens{\dprod{B_{33}}{B_{66}}-\frac{1}{h_{\alpha_3+\alpha_4+\alpha_5}+2}\dprod{B_{36}}{B_{36}}} \]
\end{example}

\section{The Diagonal Reduction Algebra \texorpdfstring{$\mathcal{Z}(\sp_{2n}\times\sp_{2n}, \sp_{2n})$}{Z(sp2nxsp2n,sp2n)}} \label{sec: diag red alg sp2n}

We now consider the case when $\mathfrak{g}=\sp_{2n}\times \sp_{2n}$ and $\mathfrak{k}=\sp_{2n}$. We use the same basis for $\sp_{2n}$ as in Section \ref{sec: symplectic Lie alg}. For $x\in \sp_{2n}$ denote \begin{equation}x^S = (x,x) \quad\text{and} \quad x^L = (x,0).\end{equation}  The Lie subalgebra generated by diagonal elements is isomorphic to $\sp_{2n}$, while $\mathfrak{g}$ is generated by $\set{x^S,x^L}_{x\in \mathcal{X}}$, where $\mathcal{X} = \set{A_{ij},B_{ij},C_{ij}}$. The superscript `L' is often omitted. \\

We use the decomposition \[\mathfrak{k} = \mathfrak{n}_-\oplus \mathfrak{h}\oplus\mathfrak{n}_+\] where \begin{align}
\mathfrak{h} &= \spn_\mathbb{C}\{A_{ii}^S\}_{i=1}^n, \\
\mathfrak{n}_+ &= \spn_\mathbb{C}\parens{\set{A^S_{ij}}_{1\leq i<j\leq n}\cup \set{B^S_{ij}}_{1\leq i\leq j\leq n}} \\
\mathfrak{n}_- &= \spn_\mathbb{C}\parens{\set{A^S_{ji}}_{1\leq i<j\leq n}\cup \set{C^S_{ij}}_{1\leq i\leq j\leq n}}.
\end{align}
For $1\leq i\leq n$ define $\varepsilon_i\in \mathfrak{h}^*$ by $\varepsilon_i(A^S_{jj}) =\delta_{ij}$, extended linearly.  We choose the positive roots \begin{equation}\Delta_+ = \set{\varepsilon_i - \varepsilon_j}_{1\leq i < j\leq n}\cup \set{\varepsilon_i + \varepsilon_j}_{1\leq i \leq j \leq n}\end{equation} so that the set of simple roots is \begin{equation}\Pi = \set{\varepsilon_i - \varepsilon_{i+1}}_{1\leq i < n}\cup\set{2\varepsilon_n}.\end{equation} For $1\leq i,j\leq n$ define \begin{align}
e_{\varepsilon_i - \varepsilon_j} &= A^S_{ij} && i\neq j \\
e_{\varepsilon_i+\varepsilon_j} &= B^S_{ij} && i\leq j \\
e_{-\varepsilon_i - \varepsilon_j} &= C^S_{ij} && i\leq j \\
h_{\varepsilon_i \pm\varepsilon_j} &= A^S_{ii} \pm A^S_{jj} && i< j \\
h_{2\varepsilon_i} &= A^S_{ii}. && 
\end{align}

\subsection{Extremal Projector} \label{sec: extremal proj for Dsp2n}
We first require a normal ordering of $\Delta_+$.
\begin{proposition}
The lexicographic ordering of the positive root vectors corresponds to a normal ordering of $\Delta_+$.
\end{proposition}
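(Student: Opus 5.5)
We read ``lexicographic ordering of the positive root vectors'' the same way as in Section~\ref{sec: extremal proj Bn case}, extended in the obvious way. First come the vectors $A^S_{ij}$ ($i<j$) in lexicographic order of $(i,j)$. Then come the vectors $B^S_{ij}$ ($i\le j$) in lexicographic order of $(i,j)$. So every root $\varepsilon_i-\varepsilon_j$ precedes every root $\varepsilon_k+\varepsilon_\ell$. If the intended convention differs, the same case analysis adapts. The plan is a direct verification of the convexity condition: first list every way a positive root can be written as a sum of two positive roots, then check in each case that the sum lies between its summands.

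\emph{Step 1: classify the decompositions.} Let $\beta=\beta_1+\beta_2$ with $\beta_1,\beta_2\in\Delta_+$. A sum of two roots of the form $\varepsilon_k+\varepsilon_\ell$ has total coefficient $4$, and no root of $C_n$ has that. So at least one summand is of type $\varepsilon_i-\varepsilon_j$ with $i<j$. If both summands are of this type, the sum can only be a root via $(\varepsilon_i-\varepsilon_k)+(\varepsilon_k-\varepsilon_j)=\varepsilon_i-\varepsilon_j$ with $i<k<j$. Otherwise the decomposition has the form $(\varepsilon_i-\varepsilon_j)+(\varepsilon_j+\varepsilon_k)=\varepsilon_i+\varepsilon_k$ with $i<j$ and $k$ arbitrary. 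This includes the subcases $k=j$, giving $(\varepsilon_i-\varepsilon_j)+2\varepsilon_j=\varepsilon_i+\varepsilon_j$, and $k=i$, giving $(\varepsilon_i-\varepsilon_j)+(\varepsilon_i+\varepsilon_j)=2\varepsilon_i$. Here one must check carefully that no other combinations occur: a sum of type $(\varepsilon_i-\varepsilon_j)+(\varepsilon_k+\varepsilon_\ell)$ with $j\notin\{k,\ell\}$ has a coefficient $-1$ together with two coefficients $+1$ (or a coefficient $2$), so it is not a root.

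\emph{Step 2: the type-$A$ case.} This is exactly the computation already done in Section~\ref{sec: extremal proj Bn case}. Namely $A^S_{ik}<A^S_{ij}<A^S_{kj}$ for $i<k<j$.

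\emph{Step 3: the mixed case.} This is the main obstacle, since the indices of the $B$'s must be sorted before they can be compared. Because all $A$'s precede all $B$'s, we automatically have $e_{\varepsilon_i-\varepsilon_j}<e_{\varepsilon_i+\varepsilon_k}$. It remains to show that $B^S_{\{i,k\}}<B^S_{\{j,k\}}$ whenever $i<j$, where $\{a,b\}$ denotes the pair written in increasing order. We split according to the position of $k$.
\begin{itemize}
\item If $k<i$, we compare $(k,i)$ with $(k,j)$. The first entries agree and $i<j$.
\item If $k=i$, we compare $(i,i)$ with $(i,j)$. The first entries agree and $i<j$.
\item If $i<k<j$, we compare $(i,k)$ with $(k,j)$. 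Here $i<k$.
\item If $k=j$, we compare $(i,j)$ with $(j,j)$. Here $i<j$.
\item If $k>j$, we compare $(i,k)$ with $(j,k)$. Here $i<j$.
\end{itemize}
In every case the lexicographic inequality holds. So the sum lies strictly between its two summands, and the ordering is normal.
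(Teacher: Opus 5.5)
Your proof is correct and follows the same route as the paper: both classify the decompositions into the type-$A$ case $(\varepsilon_i-\varepsilon_k)+(\varepsilon_k-\varepsilon_j)$ and the mixed case $(\varepsilon_i-\varepsilon_k)+(\varepsilon_j+\varepsilon_k)$, and then check lexicographic positions using the convention from the projector formula that all $A$'s precede all $B$'s. Your five-way split on the position of $k$ simply makes explicit the comparison $B_{(ij)}<B_{(jk)}$, which the paper asserts without detail.
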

\begin{proof}
The positive root associated with $A_{ij}$ with $i < j$ is $\varepsilon_i-\varepsilon_j$. The only way this could be the sum of two other positive roots is \begin{equation}\varepsilon_i - \varepsilon_j = (\varepsilon_i-\varepsilon_k) + (\varepsilon_k-\varepsilon_j)\end{equation} with $i < k < j$. The associated root vectors $A_{ik} < A_{ij} < A_{kj}$ in lexicographic order correctly places $\varepsilon_i - \varepsilon_j$ between its summands. \\
We use $(ij)$ to indicate the subscript $ij$ or $ji$, whichever is in nondecreasing order. The positive root associated with $B_{(ij)}$ is $\varepsilon_i + \varepsilon_j$. The only way this could be the sum of two other positive roots is \begin{equation}\varepsilon_i + \varepsilon_j = (\varepsilon_i - \varepsilon_k) + (\varepsilon_j  + \varepsilon_k)\end{equation} with $i < k$. The associated root vectors $A_{ik} < B_{(ij)} < B_{(jk)}$ in lexicographic order correctly places $\varepsilon_i + \varepsilon_j$ between its summands.
\end{proof}
 The Weyl vector is \begin{equation}\rho = \sum_{k=1}^n (n+1-k)\varepsilon_k.\end{equation} The extremal projector therefore has the form \begin{equation} \label{eq: extremal proj for Dsp2n} P = \prod_{1\leq i < j \leq n}^\text{lex} P_{\varepsilon_i-\varepsilon_j}[j-i] \prod_{1\leq i\leq j\leq n}^\text{lex} P_{\varepsilon_i + \varepsilon_j}\left[\frac{2(n+1)-(i+j)}{2^{\delta_{ij}}}\right],\end{equation} 
 where \begin{equation}P_\gamma[t] = \sum_{k\geq 0}\frac{(-1)^k}{k!}\varphi_{\gamma,k}[t] e^k_{-\gamma}e^k_\gamma\end{equation} and
 \begin{equation}\varphi_{\gamma,k}[t] = \prod_{\ell=1}^k (h_\gamma + t + \ell)^{-1}.\end{equation} 
 Note the similarity  between Equations \eqref{eq: extremal proj for Dsp2n} and \eqref{eq: extremal proj for Bn}.

We can then define the double coset algebra as in Section \ref{sec: double coset alg general}, which we call $\mathcal{D}(\sp_{2n}) := (U'(\sp_{2n}\times\sp_{2n}) / \mathbb{I}, \diamond)$.

\subsection{Zhelobenko Automorphisms} \label{sec: Zhelo autos Dsp2n case}

We name the simple roots $\alpha_i = \varepsilon_i - \varepsilon_{i+1}$ for $1\leq i < n$ and $\alpha_n = 2\varepsilon_n$. The subscript `$i$' is used in place of `$\alpha_i$'. \\
The action of the automorphisms $\tau_i$ on the the basis $\mathcal{X}=\set{A_{ij},B_{ij},C_{ij}}$ can be described by the following rules: \begin{itemize}
    \item For $1\leq i < n$, $\tau_i$ interchanges indices $i$ and $i+1$. The sign changes for every instance of $i$ replaced by $i+1$. \\
    \item For $i=n$, $\tau_n$ swaps $\varepsilon_n$ with $-\varepsilon_n$. The sign changes only for $A_{jn},$ $A_{nj}$ $(j\le n)$, $B_{nn}$ and $C_{nn}$.
\end{itemize}
\begin{example}
The case $\sp_6$ is sufficiently large to exhibit all of the possibilities above. We give explicitly the effect of each automorphism $\tau_i$.  \\
\begin{center}
\begin{tabular}{|c||c|c|c|}
\hline
$x$ & $\tau_1(x)$ & $\tau_2(x)$ & $\tau_3(x)$ \\
\hline
\hline
$A_{11}$ & $A_{22}$ & $A_{11}$ & $A_{11}$ \\
\hline
$A_{12}$ & $-A_{21}$ & $-A_{13}$ & $A_{12}$ \\
\hline
$A_{13}$ & $-A_{23}$ & $A_{12}$ & $-B_{13}$ \\
\hline
$A_{21}$ & $-A_{12}$ & $-A_{31}$ & $A_{21}$ \\
\hline
$A_{22}$ & $A_{11}$ & $A_{33}$ & $A_{22}$ \\
\hline
$A_{23}$ & $A_{13}$ & $-A_{32}$ & $-B_{23}$ \\
\hline
$A_{31}$ & $-A_{32}$ & $A_{21}$ & $-C_{13}$ \\
\hline
$A_{32}$ & $A_{31}$ & $-A_{23}$ & $-C_{23}$ \\
\hline
$A_{33}$ & $A_{11}$ & $A_{22}$ & $-A_{33}$ \\
\hline
$B_{11}$ & $B_{22}$ & $B_{11}$ & $B_{11}$ \\
\hline
$B_{12}$ & $-B_{12}$ & $-B_{13}$ & $B_{12}$ \\
\hline
$B_{13}$ & $-B_{23}$ & $B_{12}$ & $A_{13}$ \\
\hline
$B_{22}$ & $B_{11}$ & $B_{33}$ & $B_{22}$ \\
\hline
$B_{23}$ & $B_{13}$ & $-B_{23}$ & $A_{23}$ \\
\hline
$B_{33}$ & $B_{33}$ & $B_{22}$ & $-C_{33}$ \\
\hline
$C_{11}$ & $C_{22}$ & $C_{11}$ & $C_{11}$ \\
\hline
$C_{12}$ & $-C_{12}$ & $-C_{13}$ & $C_{12}$ \\
\hline
$C_{13}$ & $-C_{23}$ & $C_{12}$ & $A_{31}$ \\
\hline
$C_{22}$ & $C_{11}$ & $C_{33}$ & $C_{22}$ \\
\hline
$C_{23}$ & $C_{13}$ & $-C_{23}$ & $A_{32}$ \\
\hline
$C_{33}$ & $C_{33}$ & $C_{22}$ & $-B_{33}$ \\
\hline
\end{tabular}
\end{center}
\end{example}
\vspace{12pt}
We may now describe the Zhelobenko automorphisms. There are only four types of images that must be described. \begin{lemma} \label{lem: zhel auto for Dsp2n} We introduce notation $H_i = A_{ii} - A_{i+1,i+1}$ and $I_i = A_{ii} + A_{i+1,i+1}$ for $1\leq i < n$. For each $i<n$ we consider the image of $\mathcal{X}_i = (\mathcal{X}\cup\set{H_i,I_i})\setminus\set{A_{ii},A_{i+1,i+1}}$, and take $\mathcal{X}_n = \mathcal{X}$.
\begin{enumerate}[{\rm 1)}]
    \item $\breve{q}_i(\bbar{x}^L) = \frac{h_i+1}{h_i}\tau_i(x)^L + \mathbb{I}$ under any of the following conditions:
    \begin{itemize}
        \item $1\leq i < n$ and $x\in\set{ A_{i+1,j},A_{ji}, B_{i+1,j},C_{ij}}_{j\neq i,i+1}$; or
        \item $i = n$ and  $x\in\set{A_{jn}, C_{jn}}_{j\neq n}$
    \end{itemize}
    \item $\breve{q}_i(\bbar{x}^L) = \frac{h_i+2}{h_i}\tau_i(x)^L + \mathbb{I}$ under any of the following conditions: 
    \begin{itemize}
        \item $1\leq i < n$ and $x\in\set{H_i,B_{i,i+1},C_{i,i+1}}$; or
        \item $i = n$ and  $x = A_{nn}$.
    \end{itemize}
    \item $\breve{q}_i(\bbar{x}^L) = \frac{h_i + 1}{h_i-1}\tau_i(x)^L + \mathbb{I}$ under any of the following conditions: 
     \begin{itemize}
        \item $1\leq i < n$ and $x\in \set{A_{i+1,i},B_{i+1,i+1},C_{ii}}$; or
        \item $i = n$ and  $x= C_{nn}$.
    \end{itemize}
    \item $\breve{q}_i(\bbar{x}^L) = \tau_i(x)^L + \mathbb{I}$ for every other $x\in\mathcal{X}_i$.
\end{enumerate} All of this holds if the superscript 'L' is replaced with 'S'.
\end{lemma}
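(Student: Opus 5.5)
The plan is to mirror the proof of Lemma \ref{lem: zhel auto Bn}. Fix a simple root $\gamma=\alpha_i$ and write $e_{\pm\gamma}$ for the diagonal root vectors, so $e_{\pm\gamma}=A^S_{i,i+1},A^S_{i+1,i}$ for $i<n$ and $e_{\gamma}=B^S_{nn}$, $e_{-\gamma}=C^S_{nn}$ for $i=n$. The key structural point is that $\opad e_{\pm\gamma}$ acts on $x^L=(x,0)$ through the first factor only: $[e^S_{\pm\gamma},x^L]=[e_{\pm\gamma},x]^L$. Likewise $\tau_\gamma$, being built from $\exp\opad e^S_{\pm\gamma}$, satisfies $\tau_\gamma(x^L)=\tau_\gamma(x)^L$. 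Hence every bracket computation in $\sp_{2n}\times\sp_{2n}$ reduces to one in $\sp_{2n}$. For the superscript $S$ the same holds, since $x\mapsto x^S$ is a Lie algebra homomorphism.

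Each $x\in\mathcal{X}_i$ lies in the adjoint representation of $\sp_{2n}$, and restricted to the $\mathfrak{sl}_2$-triple of $\gamma$ the $\gamma$-strings have length at most $3$. (For the long root $2\varepsilon_n$ the strings through $A_{nn},B_{nn},C_{nn}$ have length $3$, and those through $A_{jn},C_{jn}$ etc.\ have length $2$.) Therefore $(\opad e_{-\gamma})^k\tau_\gamma(x)=0$ for $k\ge 3$. As in the proof of Lemma \ref{lem: zhel auto Bn}, only the first three terms of $\breve{q}_i$ survive:
\[
\breve{q}_i(\bbar{x}) = \tau_\gamma(x) + \frac{1}{h_\gamma}[e_\gamma,[e_{-\gamma},\tau_\gamma(x)]] + \frac{1}{2h_\gamma(h_\gamma-1)}(\opad e_\gamma)^2(\opad e_{-\gamma})^2\tau_\gamma(x) + \mathbb{I}.
\]

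Next I classify each $x$ by the position of $y=\tau_\gamma(x)$ in its $\gamma$-string.
\begin{itemize}
\item Case 1: $y$ is a highest vector of weight $1$ in a $2$-dimensional string. Then $[e_\gamma,[e_{-\gamma},y]]=[h_\gamma,y]=y$, giving the factor $\frac{h_\gamma+1}{h_\gamma}$.
\item Case 2: $y$ is the zero-weight vector of a $3$-dimensional string. Then $[e_\gamma,[e_{-\gamma},y]]=2y$ and the third term vanishes, giving $\frac{h_\gamma+2}{h_\gamma}$.
\item Case 3: $y$ is the highest vector of weight $2$. Then the second term is $2y/h_\gamma$ and the third is $4y/(2h_\gamma(h_\gamma-1))$, summing to $\frac{(h_\gamma+1)}{(h_\gamma-1)}y$.
\item Case 4: $[e_{-\gamma},y]=0$, giving $\tau_\gamma(x)$.
\end{itemize}
Since $h_\gamma$ is central in weight considerations but placed on the left, I must make sure the coefficients in $U'(\mathfrak{h})$ appear to the left, exactly as written in $\breve{q}$.

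The remaining work, and the main obstacle, is the bookkeeping. I must verify that the listed sets exhaust each case, using the sign and index rules for $\tau_i$ (and the $\sp_6$ table) to compute $\tau_\gamma(x)$ and its $\gamma$-weight $\langle\text{wt},\gamma^\vee\rangle$. For $i<n$, the elements $A_{i+1,j},A_{ji},B_{i+1,j},C_{ij}$ ($j\ne i,i+1$) map to vectors of $\alpha_i$-weight $+1$ that are highest, which is Case 1. $H_i,B_{i,i+1},C_{i,i+1}$ map to weight-$0$ vectors of $3$-strings, which is Case 2. $A_{i+1,i},B_{i+1,i+1},C_{ii}$ map to $A_{i,i+1},B_{ii},C_{i+1,i+1}$ of weight $2$, which is Case 3. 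The rest are lowest vectors or weight $0$ in trivial strings (e.g.\ $I_i$, and indices away from $i,i+1$), which is Case 4.

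For $i=n$ I use $h_\gamma=A^S_{nn}$ together with the rule that $\tau_n$ swaps $\varepsilon_n\leftrightarrow-\varepsilon_n$. Then $A_{jn},C_{jn}$ go to $B_{jn},A_{nj}$, which have $2\varepsilon_n$-coroot weight $1$ and are highest, giving Case 1. $A_{nn}$ is the zero-weight vector, giving Case 2. $C_{nn}\mapsto -B_{nn}$ has weight $2$, giving Case 3. Everything else is Case 4. I would check one representative per case explicitly, as in Lemma \ref{lem: zhel auto Bn}, paying attention to the long-root normalization $h_{2\varepsilon_n}=A_{nn}$ so that the coroot pairings are integers $1,2$.
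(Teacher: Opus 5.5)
Your proposal is correct and takes essentially the same approach as the paper, whose proof simply refers back to the four-case argument of Lemma \ref{lem: zhel auto Bn}. Your additions spell out what that reference leaves implicit: $\opad e^S_{\pm\gamma}$ and $\tau_\gamma$ act on $x^L$ through the first factor only, every $\gamma$-string in the adjoint representation has length at most $3$, and for the long root $2\varepsilon_n$ one has $h_{2\varepsilon_n}=A_{nn}$, so the coroot pairings needed for the case assignment are $\pm 1,\pm 2$.
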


\begin{proof}

Refer to the proof of Lemma \ref{lem: zhel auto Bn}.
\end{proof}

\subsection{Alternate Realizations of  \texorpdfstring{$\mathcal{B}_n$}{Bn}} \label{sec: alt realziations}

We now explore other realizations of $\mathcal{B}_n$. On one hand, $\mathcal{B}_n$ is a special case of a reduction algebra $\mathcal{Z}(\gl_n\ltimes W,\gl_n)$ where $W$ is a $\gl_n$-module. On the other hand, $\mathcal{B}_n$ can be embedded into $\mathcal{D}(\sp_{2n})$.

\begin{theorem} \label{thm: alt realizations}

Let $\mathfrak{h}$ be the Cartan subalgebra of $\sp_{2n}$ and $U'(\mathfrak{h})$ be the localization of $U(\mathfrak{h})$, both as described in Section \ref{sec: diag red alg sp2n}. The following are isomorphic as $U'(\mathfrak{h})$-rings:
    
    \begin{enumerate}
        
        \item The $U'(\mathfrak{h})$-subring of $\mathcal{D}(\sp_{2n})$ generated by $\bbar{B}_{ij}$, ($1\le i\le j\le n$);
        \item The localization at $U'(\mathfrak{h})$ of $\mathcal{Z}(\mathfrak{p}_{2n},\mathfrak{gl}_n)$ where $\mathfrak{p}_{2n}$ is the parabolic subalgebra $\left\{\left(\begin{smallmatrix}A&B=B^\top\\0&-A^\top\end{smallmatrix}\right)\right\}$ of $\mathfrak{sp}_{2n}$;
        \item The localization at $U'(\mathfrak{h})$ of $\mathcal{Z}(\gl_n\ltimes W,\mathfrak{gl}_n)$ where $W=S^2(\mathbb{C}^n)$ is the symmetric square of the fundamental representation.
    \end{enumerate}
\end{theorem}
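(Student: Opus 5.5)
Two isomorphisms are needed, and they are of different kinds.

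\emph{(2)$\cong$(3).} This one is structural. The map $\left(\begin{smallmatrix}A&B\\0&-A^\top\end{smallmatrix}\right)\mapsto (A,B)$ is a Lie algebra isomorphism $\mathfrak{p}_{2n}\cong \gl_n\ltimes S^2(\mathbb{C}^n)$. The reason is that $\mathfrak{u}=\spn\set{B_{ij}}$ is an abelian ideal. Moreover $[A,B]=AB+BA^\top$, which is exactly the action of $\gl_n$ on symmetric matrices $\cong S^2(\mathbb{C}^n)$, and this isomorphism restricts to the identity on $\gl_n$. Reduction algebras depend only on the pair $(\mathfrak{g},\mathfrak{k})$ together with the triangular decomposition of $\mathfrak{k}$. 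Hence the isomorphism induces an isomorphism of reduction algebras, and of their localizations, compatible with $U'(\mathfrak{h})$. We also record that the Cartan $\spn\set{A_{ii}}$ of $\gl_n$ coincides with the Cartan of $\sp_{2n}$ (via $A_{ii}\mapsto A^S_{ii}$). The localizing set for $\sp_{2n}$ additionally inverts $h_{\varepsilon_i+\varepsilon_j}+k$. So ``localization at $U'(\mathfrak{h})$'' means tensoring $\mathcal{B}_n$ over $D_{\gl_n}$ with the larger $D$. This is harmless, because by Theorem \ref{thm: generators and ordering relations} $\mathcal{B}_n$ is a free left module over its Cartan ring with basis the ordered monomials in $\bbar{B}_{ij}$.

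\emph{(1)$\cong$(2).} This is the substantive part, and I would do it by presentations. Define $\Phi:\mathcal{B}_n\otimes D\to\mathcal{D}(\sp_{2n})$ by $\bbar{B}_{ij}\mapsto \bbar{B}^L_{ij}$ and $A_{ii}\mapsto A^S_{ii}$. This is compatible with weights, since $\bbar{B}^L_{ij}$ has weight $\varepsilon_i+\varepsilon_j$ in both algebras.

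To see that $\Phi$ is well defined, I must show that the ordering relations of Theorem \ref{thm: All relations for Bn} hold for the $\bbar{B}^L$'s in $\mathcal{D}(\sp_{2n})$. The key step is a comparison of diamond products. For $x,y\in\spn\set{B^L_{ij}}$ we have $\bbar{x}\diamond\bbar{y}=xPy+\mathbb{I}$ with $P$ as in \eqref{eq: extremal proj for Dsp2n}. The factors $P_{\varepsilon_i+\varepsilon_j}$ sit to the right of the $\gl_n$-part. They involve $e_{\varepsilon_i+\varepsilon_j}=B^S_{ij}$ acting on $y$, and $[B^S,B^L]=0$ because $B$'s commute. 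Hence each of these factors acts on $y+I_+$ as $1$, and only the $\gl_n$-part $\prod P_{\varepsilon_i-\varepsilon_j}[j-i]$ survives.

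Two differences remain. First, this $\gl_n$-part appears with the $\sp_{2n}$ shifts $[j-i]$, which match \eqref{eq: extremal proj for Bn} exactly. Second, $e_{-\gamma}=A^S_{ji}$ acts on $B^L$ by the same formulas as $A_{ji}$ does on $B$ in $\mathfrak{p}_{2n}$. One must also check that reducing modulo $I_-$ in $\mathcal{D}(\sp_{2n})$ agrees with reducing modulo $\mathfrak{n}_-^{\gl}$ in $\mathcal{B}_n$. This holds because any $C^S$ term produced is pushed left and killed; moving $C^S$ past $B^L$ produces $A^L$ and $A^S$ terms, and this needs checking. To sidestep that check, I would instead use the direct formula: restricted to monomials in $B^L$, both diamond products are given by the same expression $x\,P_{\gl_n}\,y$ computed in the subalgebra $U(\gl_n^S\ltimes \mathfrak{u}^L)$. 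This subalgebra is isomorphic to $U(\mathfrak{p}_{2n})$ via $A^S\mapsto A$, $B^L\mapsto B$. Hence all $17$ initial products coincide, and so do all ordering relations. Alternatively, one could verify only the seven computed relations and transport the rest by the Zhelobenko automorphisms, using Lemma \ref{lem: zhel auto for Dsp2n}, whose type-$A$ cases agree with Lemma \ref{lem: zhel auto Bn}.

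\emph{Injectivity and conclusion.} Injectivity follows from a PBW argument. The ordered monomials in $\bbar{B}^L_{ij}$ are $U'(\mathfrak{h})$-linearly independent in $\mathcal{D}(\sp_{2n})$, because under $\phi$ they correspond to the images of PBW monomials $\prod B^L$ in $U'(\mathfrak{g})/\mathbb{I}$, and $U'(\mathfrak{g})/\mathbb{I}\cong D\otimes S(\mathfrak{p})$ with $\mathfrak{p}=\set{x^L}$. Surjectivity onto the subring of (1) is immediate from the definitions.

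I expect the main obstacle to be the rigorous justification that the $\sp_{2n}$ extremal projector acts on $y\in U(\mathfrak{u}^L)$ exactly as the $\gl_n$ projector does, including the mod-$I_-$ reduction. The delicate points are the ordering of factors in $P$ and the fact that $I_-$ contains the $C^S$ directions, which could a priori contribute. My first attempt would use $[B^S,B^L]=0$ together with the fact that $xP_{\gl}y$ already lies in $U(\gl_n^S\ltimes\mathfrak{u}^L)$.
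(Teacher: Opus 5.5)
Your proposal is correct and follows essentially the paper's route. You use the same identification $\mathfrak{p}_{2n}\cong\gl_n\ltimes S^2(\mathbb{C}^n)$ for (2)$\cong$(3), and for (1)$\cong$(2) the same key observation: the factors $P_{\varepsilon_i+\varepsilon_j}[t]$ sit to the right in the factorization of $P_{\sp_{2n}}$ and act trivially on $B^L$-monomials because the $B$'s commute, so $xP_{\sp_{2n}}y\equiv xP_{\gl_n}y$ modulo $\mathbb{I}$ and $\bbar{B}_{ij}\mapsto\bbar{B}^L_{ij}$, $A_{ii}\mapsto A^S_{ii}$ is a homomorphism.

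Your injectivity step is more explicit than the paper's bare appeal to Theorem \ref{thm: generators and ordering relations}, but it needs restating: ordered diamond monomials are not literally the classes of PBW monomials under $\phi$. What your direct-formula argument shows is that $\Phi$ is the natural map $U'(\gl_n^S\ltimes\mathfrak{u}^L)/\mathbb{I}_{\gl_n}\to U'(\sp_{2n}\times\sp_{2n})/\mathbb{I}$, $\bbar{u}\mapsto\bbar{u}$, which is well defined since $\mathbb{I}_{\gl_n}\subseteq\mathbb{I}$, and that map is injective by the PBW decomposition you cite.
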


\begin{proof}
Observe $2$ and $3$ are equivalent because  $\mathfrak{p}_{2n}$ is isomorphic to $\gl_n\ltimes W$ as Lie algebras and $\gl_n$-modules via the map \[A_{ij}\mapsto (E_{ij},0),\quad B_{ij}\mapsto (0,e_ie_j/2^{\delta_{ij}}).\]

It remains to show $1$ and $3$ are equivalent. Let $\widetilde{\mathcal{B}}_n$ be the $U'(\mathfrak{h})$-subring of $\mathcal{D}(\sp_{2n})$ generated by $\set{\bbar{B}_{ij}^L}_{1\leq i\leq j\leq n}$. The key observation here is that regardless of shift $t$, \[P_{\varepsilon_i + \varepsilon_j}[t](\bbar{B}_{k\ell}^L) = B_{k\ell}^L + I_-\] because $B_{k\ell}$ commutes with $B_{ij}$ in $\sp_{2n}$ for any $i,j,k,\ell$. Thus in $\mathcal{D}(\sp_{2n})$ by \eqref{eq: extremal proj for Dsp2n} we have \[\dprod{B_{ij}^L}{B_{k\ell}^L} = B_{ij}^LP_{\sp_{2n}} B_{k\ell}^L+\mathbb{I} = B_{ij}^LP_{\gl_{n}}B_{k\ell}^L + \mathbb{I}.\] In other words, all the relations of Theorem \ref{thm: All relations for Bn} hold in $\mathcal{D}(\sp_{2n})$. By Theorem \ref{thm: generators and ordering relations} these relations fully define the structure of $\widetilde{\mathcal{B}}_n$. Hence the map $\mathcal{B}_n\to \widetilde{\mathcal{B}}_n$ given by $B_{ij}\mapsto B_{ij}^L$ and $A_{ii}\mapsto A_{ii}^S$ is injective homomorphism. The only difference between them is that $\widetilde{\mathcal{B}}_n$ has been localized further (for instance, the element $h_{\alpha_n}^{-1}$ is present in $\widetilde{\mathcal{B}}_n$, but has no analogue in $\mathcal{B}_n$). Localizing $\mathcal{B}_n$ at $U'(\mathfrak{h})$ fixes this issue and gives the desired isomorphism.
    
\end{proof}

\subsection{Chevalley Anti-Involution} \label{sec: Chev anti-inv}
The \textit{Chevalley anti-involution} $\epsilon: \sp_{2n}\to\sp_{2n}$ is an anti-automorphism of order $2$ that maps 
$A_{ij}\mapsto A_{ji}$, $B_{ij}\mapsto C_{ij}$, and $C_{ij}\to B_{ij}$. It can be extended to an anti-automorphism of $U'(\sp_{2n}\times \sp_{2n})$ so that $\epsilon(x^L) = \epsilon(x)^L$ and $\epsilon(x^S) = \epsilon(x)^S$. Like Zhelobenko automorphisms, the Chevalley anti-involution helps cut down the number of required computations.

\begin{remark} \label{rem: Order reverse}
The $7$ initial relations in $\mathcal{B}_n$ from which all other ordering relations were obtained has some redundancy when $\mathcal{B}_n$ is viewed as a subalgebra of $\mathcal{D}(\sp_{2n})$. Specifically, relations of type \eqref{rel: 3b} and \eqref{rel: 4b} are recoverable from \eqref{rel: 3a} and \eqref{rel: 4a}, respectively, if we also allow the use of Chevalley anti-involution $\epsilon$. This requires the map \[\varsigma:= \varepsilon(\breve{q}_n)(\breve{q}_{n-1}\breve{q}_n\breve{q}_{n-1})\cdots(\breve{q}_1\cdots \breve{q}_n\cdots \breve{q}_1)\] which has the effect of reversing the order of products. That is, \[\varsigma(\dprod{B_{ij}}{B_{k\ell}} )= \omega \dprod{B_{k\ell}}{B_{ij}}\] for some invertible $\omega\in U'(\mathfrak{h})$. 
\end{remark}

\subsection{Ordering Relations of \texorpdfstring{$\mathcal{D}(\sp_{2n})$}{D(sp2n)}}
Similar to Section \ref{sec: structure of Bn}, $\mathcal{D}(\sp_{2n})$ is generated as a $U'(\mathfrak{h})$-module by $\mathcal{X} = \set{A_{ij},B_{ij},C_{ij}}$. We impose an order on the generators compatible with the partial order on the weights. \begin{itemize}
    \item For $i\leq j$ and $k\leq \ell$ , $B_{ij} < B_{k\ell}$ when $ij > k\ell$; this is the reverse of the order used in Section \ref{sec: structure of Bn}.
    \item For $i < j$ and $k < \ell$, $A_{ij} < A_{k\ell}$ when $j-i < \ell-k$, or $j-i =\ell-k$ and $i < k$.
    \item For all $i < j$ and $k \leq \ell$, $A_{ij} < B_{k\ell}$
    \item For all $\gamma\in \Delta_+$, $\epsilon(e_\gamma) < e_\gamma$
    \item For all $\gamma, \sigma\in \Delta_+$, if $e_{\gamma} < e_{\sigma}$, then $\epsilon(e_\sigma) < \epsilon(e_\gamma)$.
    \item For all $i<j$, $A_{ii} < A_{jj}$.
\end{itemize}

Note that the Chevalley anti-involution reverses this order for any generators $x,y$ with nonzero weight: if $x < y$, then $\epsilon(y) < \epsilon(x)$. Because of this, $\epsilon$ respects ordered (and misordered) diamond products between elements of nonzero weight; that is, if $\dprod{x}{y}$ is (mis)ordered, then $\epsilon(\dprod{x}{y}) = \bbar{\epsilon(y)}\diamond \bbar{\epsilon(x)}$ is (mis)ordered.

The ordering relations for $\mathcal{B}_n$ given in Theorem \ref{thm: All relations for Bn} can then be used to produce some ordering relations of $\mathcal{D}(\sp_{2n})$.
\begin{theorem}\label{thm: ord rels for non roots}
Every ordering relation of $\mathcal{D}(\sp_{2n})$ whose weight is neither zero nor a root of $\sp_{2n}$ can be obtained by successively applying Zhelobenko automorphisms and/or the Chevalley anti-involution to an initial relation of Theorem \ref{thm: All relations for Bn}.
\end{theorem}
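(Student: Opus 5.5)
We reduce the statement to a combinatorial question about weights and Weyl group orbits. First, observe that a misordered product $\dprod{y}{x}$ with $x,y\in\mathcal{X}$ has weight $\mathrm{wt}(x)+\mathrm{wt}(y)$. The weights of generators are $\pm\varepsilon_i\pm\varepsilon_j$ (and $0$ for $A_{ii}$), so a pair whose total weight is neither zero nor a root consists of two root vectors whose roots $\beta,\gamma$ satisfy $\beta+\gamma\notin\Delta\cup\{0\}$. Up to the sign of each $\varepsilon_k$, such sums are $\pm 2\varepsilon_i\pm2\varepsilon_j$, $\pm 2\varepsilon_i\pm\varepsilon_j\pm\varepsilon_k$, $\pm\varepsilon_i\pm\varepsilon_j\pm\varepsilon_k\pm\varepsilon_\ell$ (distinct indices), or $\pm 4\varepsilon_i$ and $\pm 2\varepsilon_i\pm2\varepsilon_i$-type degenerate cases. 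These are exactly the weight shapes of the twelve types in Table \ref{tab: rel types}, namely $2\varepsilon_i+2\varepsilon_j$, $2\varepsilon_i+\varepsilon_j+\varepsilon_k$, $\varepsilon_i+\varepsilon_j+\varepsilon_k+\varepsilon_\ell$, and $3\varepsilon_i+\varepsilon_j$ (from types \eqref{rel: 1a}, \eqref{rel: 1b}). The first step is therefore to list all non-zero, non-root weights $\mu$ of degree-two monomials and check that the Weyl group $W(C_n)$ (signed permutations) carries each $\mu$ to a dominant-for-$\gl_n$ weight with all coefficients positive, i.e. to a weight occurring in $\mathcal{B}_n$.

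The second step is to realize the Weyl group element by Zhelobenko automorphisms while tracking the whole relation. By Theorem \ref{thm: alt realizations} the relations of Theorem \ref{thm: All relations for Bn} hold in $\mathcal{D}(\sp_{2n})$. By Lemma \ref{lem: zhel auto for Dsp2n}, each $\breve{q}_i$ sends $\bbar{x}$ to an invertible element of $U'(\mathfrak{h})$ times $\bbar{\tau_i(x)}$, and sends $U'(\mathfrak{h})$ to itself. Because the $\breve{q}_i$ are algebra automorphisms, applying a word $\breve{q}_{i_1}\cdots\breve{q}_{i_r}$ to a relation of $\mathcal{B}_n$ of weight $\mu$ gives a valid relation of weight $w\mu$ whose quadratic terms are, up to invertible Cartan factors, the $\tau$-images of the original terms. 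Since the homogeneous subspace of weight $w\mu$ in the span of quadratic monomials has the same dimension as that of weight $\mu$ (the $\tau$'s permute generators up to sign), and the ordering relation in each weight is unique by Theorem \ref{thm: generators and ordering relations}, the transported relation is the ordering relation once it has the correct shape. The linear and constant terms vanish because no generator has weight $w\mu$ when $\mu$ is not a root or zero, so these relations are automatically homogeneous.

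The main obstacle is order compatibility. The image relation expresses one product in terms of others, but we need exactly one misordered term on the left and only ordered terms on the right, with respect to the order on $\mathcal{D}(\sp_{2n})$ fixed above. My plan is to take the reduced word for $w$ and check, weight class by weight class, that $\tau_i$ preserves the relative order of the finitely many generator pairs of that weight. The problematic reflection is $\tau_n$, which mixes $A$'s with $B$'s and $C$'s. Where the order is reversed, I would compose with $\epsilon$, which reverses order for nonzero weights. I would also use $\varsigma$ from Remark \ref{rem: Order reverse}, which reverses products, to flip a relation with all terms reversed into a properly ordered one. Concretely, for each orbit representative I would choose $w$ as a product of minimal coset representatives that first moves indices using type $A$ generators, which preserve order as in Theorem \ref{thm: All relations for Bn}, and then flips signs one index at a time using $\breve{q}_n$ conjugated by $\breve{q}_{n-1}\cdots$. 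I would verify on the $\sp_6$ and $\sp_8$ tables that each sign flip either preserves order or reverses it uniformly, fixing the latter with $\epsilon$. Finally, when the resulting relation has the right weight but is not yet solved for the misordered product, I would argue via a triangular Cramer-type solve as in Proposition \ref{prop: initial rels 1}.
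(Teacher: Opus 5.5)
You set up the framework correctly (weights up to signed permutations, transport of relations by the $\breve{q}_i$ up to invertible Cartan factors, automatic homogeneity for non-root weights, uniqueness of the ordering relation). However, the step you yourself call the main obstacle is only a plan, and the criterion guiding it is false.

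\textbf{A single sign flip can mix the order.} Flipping $\varepsilon_3$ on the class $\varepsilon_1+\varepsilon_2+\varepsilon_3+\varepsilon_4$ sends the $\mathcal{B}_n$-ordered products $\dprod{B_{12}}{B_{34}}$, $\dprod{B_{13}}{B_{24}}$, $\dprod{B_{14}}{B_{23}}$ to $\dprod{B_{12}}{A_{43}}$, $\dprod{A_{13}}{B_{24}}$, $\dprod{B_{14}}{A_{23}}$. In $\mathcal{D}(\sp_{2n})$ one of these is ordered and two are misordered, and you get the opposite split if you start from the $\varsigma$-reversed relation. Since $\epsilon$ and $\varsigma$ act uniformly on all terms, neither repairs a mixed image.

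\textbf{Some targets cannot be reached this way at all.} For weight $\varepsilon_1+\varepsilon_4-\varepsilon_2-\varepsilon_3$, no signed permutation $\sigma$ applied to the $\mathcal{B}_n$-ordered relations works. Putting $C_{23}$ first forces $\sigma(\varepsilon_1)<0$. The ordered $A$-products here are $\dprod{A_{43}}{A_{12}}$ and $\dprod{A_{42}}{A_{13}}$, so both must start with a negative root vector, which forces $\sigma(\varepsilon_x)=\varepsilon_4$ for two different $x$. Such classes must be reached through the $\varsigma$-reversed relations or as $\epsilon$ of the opposite class.

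\textbf{Small-rank checks do not transfer.} Whether a composite is order-compatible depends on height comparisons such as $A_{ik}$ versus $A_{j\ell}$. These depend on the actual index gaps and are not invariant under relabeling: $A_{13}<A_{24}$, but $A_{46}<A_{15}$. So checking $\sp_6$ and $\sp_8$ does not settle general $n$, and your ``type $A$ moves first'' strategy does not control the final order among the $A$'s.

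What is missing is exactly the content of the paper's proof. For each weight shape ($3\varepsilon_i-\varepsilon_j$, $2\varepsilon_i-2\varepsilon_j$, $2\varepsilon_i+\varepsilon_j-\varepsilon_k$, $2\varepsilon_i-\varepsilon_j-\varepsilon_k$, $\varepsilon_i+\varepsilon_j+\varepsilon_k-\varepsilon_\ell$, $\varepsilon_i+\varepsilon_j-\varepsilon_k-\varepsilon_\ell$), the paper gives an explicit initial relation and signed permutation, with case conditions such as $A_{ik}<A_{j\ell}$ and $A_{jk}<A_{i\ell}$. The composite sends every ordered product of that initial relation to an ordered product; the remaining classes are then handled by $\varsigma$ and $\epsilon$. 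Only the final composite matters, so your step-by-step check that each $\tau_i$ preserves order is both unnecessary and generally false.

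Your Cramer-type fallback would recover the ordering relations, since in a non-root weight class both the transported monomials and the $\mathcal{D}$-ordered monomials form $U'(\mathfrak{h})$-bases. But it only exhibits them as $U'(\mathfrak{h})$-linear combinations of several transported relations. The theorem asserts more: that each relation is the image of a single initial relation.
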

\begin{proof}
If $x,y\in \mathcal{X}$ (are generators of $\mathcal{D}(\sp_{2n})$) and $h\in U'(\mathfrak{h})$ is invertible, then $\breve{q}_i(h \dprod{x}{y})$ has the form $h' \bbar{\tau_i(x)}\diamond\bbar{\tau_i(y)}$ for some invertible $h'\in U'(\mathfrak{h})$ by Lemma \ref{lem: zhel auto for Dsp2n}. Hence when applying a Zhelobenko automorphism to an ordering relation, we can always multiply by an invertible element of $U'(\mathfrak{h})$ so the left-hand side is a product of generators. \\
The Zhelobenko automorphisms act by signed permutations on weights, with $\breve{q}_i$ interchanging $\varepsilon_i$ and $\varepsilon_{i+1}$ for $i < n$, and $\breve{q}_n$ sending $\varepsilon_n$ to $-\varepsilon_n$. For each case we provide a signed permutation that corresponds to a composition of Zhelobenko automorphisms that transforms an initial relation to the target relation, while respecting the order of each diamond product. These are expressed in the abbreviated form $(i_1,i_2,i_3,\cdots i_m)$, meaning $j\mapsto i_j$ for $1\leq j\leq m$, and $j\mapsto j$ for $j > m$.

The following table addresses the most relevant cases. The 'Relation' and 'Initial Relation' columns give the misordered diamond product in the corresponding relation (for 'Initial Relation', we refer to the order used for $\mathcal{B}_n$). The indices $i,j,k,\ell$ are distinct.

\begin{table}[ht] 
\begin{center}
\begin{tabular}{|c|c|c|c|c|}
\hline
Weight & Relation & Initial Relation & Signed Permutation & Conditions \\
\hline
$3\varepsilon_i - \varepsilon_j$ & $\dprod{B_{ii}}{A_{ij}}$ & $\dprod{B_{22}}{B_{12}}$ & $(-j,i)$ &  \\
\hline
$2\varepsilon_i - 2\varepsilon_j$ & $\dprod{B_{ii}}{C_{jj}}$ & $\dprod{B_{22}}{B_{11}}$ & $(-j,i)$ & \\
\hline
$2\varepsilon_i + \varepsilon_j - \varepsilon_k$ & $\dprod{B_{ii}}{A_{jk}}$ & $\dprod{B_{22}}{B_{13}}$ & $(-k,i,j)$ & \\
\hline
$2\varepsilon_i + \varepsilon_j - \varepsilon_k$ & $\dprod{B_{ij}}{A_{ik}}$ & $\dprod{B_{23}}{B_{12}}$ & $(-k,i,j)$ & \\
\hline
$2\varepsilon_i - \varepsilon_j - \varepsilon_k$ & $\dprod{A_{ik}}{A_{ij}}$ & $\dprod{B_{23}}{B_{12}}$ & $(-j,i,-k)$ & $j<k$\\
\hline
$2\varepsilon_i - \varepsilon_j - \varepsilon_k$ & $\dprod{B_{ii}}{C_{jk}}$ & $\dprod{B_{22}}{B_{13}}$ & $(-j,i,-k)$ & $j<k$\\
\hline
$\varepsilon_i + \varepsilon_j + \varepsilon_k - \varepsilon_\ell$ & $\dprod{B_{jk}}{A_{i\ell}}$ & $\dprod{B_{34}}{B_{12}}$ & $(-\ell,i,j,k)$ & $i < j < k$ \\
\hline
$\varepsilon_i + \varepsilon_j + \varepsilon_k - \varepsilon_\ell$ & $\dprod{B_{ik}}{A_{j\ell}}$ & $\dprod{B_{24}}{B_{13}}$ & $(-\ell,i,j,k)$ & $i < j < k$ \\
\hline
$\varepsilon_i + \varepsilon_j + \varepsilon_k - \varepsilon_\ell$ & $\dprod{B_{ij}}{A_{k\ell}}$ & $\dprod{B_{23}}{B_{14}}$ & $(-\ell,i,j,k)$ & $i < j < k$ \\
\hline
$\varepsilon_i + \varepsilon_j - \varepsilon_k - \varepsilon_\ell$ & $\dprod{B_{ij}}{C_{k\ell}}$ & $\dprod{B_{23}}{B_{14}}$ & $(-k,i,j,-\ell)$ & $A_{ik}<A_{j\ell}, A_{jk} < A_{i\ell}$ \\
\hline
$\varepsilon_i + \varepsilon_j - \varepsilon_k - \varepsilon_\ell$ & $\dprod{A_{j\ell}}{A_{ik}}$ & $\dprod{B_{34}}{B_{12}}$ & $(-k,i,j,-\ell)$ & $A_{ik}<A_{j\ell}, A_{jk} < A_{i\ell}$ \\
\hline
$\varepsilon_i + \varepsilon_j - \varepsilon_k - \varepsilon_\ell$ & $\dprod{A_{i\ell}}{A_{jk}}$ & $\dprod{B_{24}}{B_{13}}$ & $(-k,i,j,-\ell)$ & $A_{ik}<A_{j\ell}, A_{jk} < A_{i\ell}$ \\
\hline
$\varepsilon_i + \varepsilon_j - \varepsilon_k - \varepsilon_\ell$ & $\dprod{B_{ij}}{C_{k\ell}}$ & $\dprod{B_{23}}{B_{14}}$ & $(-\ell,i,j,-k)$ & $A_{j\ell}<A_{ik}, A_{i\ell} < A_{jk}$ \\
\hline
$\varepsilon_i + \varepsilon_j - \varepsilon_k - \varepsilon_\ell$ & $\dprod{A_{ik}}{A_{j\ell}}$ & $\dprod{B_{24}}{B_{13}}$ & $(-\ell,i,j,-k)$ & $A_{j\ell}<A_{ik}, A_{i\ell} < A_{jk}$ \\
\hline
$\varepsilon_i + \varepsilon_j - \varepsilon_k - \varepsilon_\ell$ & $\dprod{A_{jk}}{A_{i\ell}}$ & $\dprod{B_{34}}{B_{12}}$ & $(-\ell,i,j,-k)$ & $A_{j\ell}<A_{ik}, A_{i\ell} < A_{jk}$ \\
\hline
\end{tabular} 
\end{center}
\caption{Extending ordering relations of \texorpdfstring{$\mathcal{B}_n$}{Bn} to \texorpdfstring{$\mathcal{D}(\sp_{2n})$}{Dsp2n}}
\label{tab: more relations}
\end{table}

\phantom{.}
This accounts for half of the relations whose weight is not a root of $\sp_{2n}$. The other half are obtained by applying the Chevalley anti-involution $\epsilon$ to all of the relations of Theorem \ref{thm: All relations for Bn} (after applying the order-reversing map $\varsigma$ from Remark \ref{rem: Order reverse}) and Table \ref{tab: more relations}.

\end{proof}

\begin{example}
We claim the ordering relation for $\dprod{B_{12}}{A_{13}}$ can be obtained from the initial relation for $\dprod{B_{23}}{B_{11}}$ (type \ref{rel: 4b}) via the signed permutation $(-3,1,2)$ (which in this notation means $1\mapsto -3, 2\mapsto 1,3\mapsto 2, j\mapsto j$ for $j>3$). More precisely, this signed permutation is achieved by \begin{equation*}
\breve{q}_3\cdots \breve{q}_n\cdots \breve{q}_3\breve{q}_2\breve{q}_1.
\end{equation*}
The initial relations of type \ref{rel: 4b} has the form \begin{equation*}
\dprod{B_{23}}{B_{12}} = c_1 \dprod{B_{12}}{B_{23}} + c_2 \dprod{B_{13}}{B_{22}}
\end{equation*} for some nonzero $c_1,c_2\in U'(\mathfrak{h})$. Applying the signed permutation gives \begin{equation*}
d_1 \dprod{B_{12}}{A_{13}} = d_2 \dprod{A_{13}}{B_{12}} + d_3\dprod{A_{23}}{B_{11}}
\end{equation*} where $d_1,d_2,d_3\in U'(\mathfrak{h})$ are nonzero and $d_1$ is invertible. The most important detail is that the order of each diamond product pair is preserved by the chosen signed permutation, as it is here.

The relations not mentioned in the table are obtained by Chevalley anti-involution. For instance, the ordering relation for $\dprod{A_{32}}{C_{12}}$ is obtained from the ordering relation for $\dprod{B_{12}}{A_{23}}$, and can be expressed as \begin{equation*}
 \dprod{A_{31}}{C_{12}} d_1=  \dprod{C_{12}}{A_{31}} d_2 + \dprod{C_{11}}{A_{32}} d_3
\end{equation*}
\end{example}

\bibliographystyle{abbrv}

\begin{thebibliography}{AST1973}
\bibitem[AST1973]{Ast1973}R. M. Asherova, Yu. F. Smirnov and V. N. Tolstoy, Projection operators for simple Lie groups (Russian), Teoret. Mat. Fiz, vol. 8, pp. 813--825 (1973).
\bibitem[HW2022]{Hart2022} J. T. Hartwig and D. A. Williams II, Diagonal Reduction Algebra for $\mathfrak{osp}(1|2)$, Theoretical and Mathematic Physics, vol. 210, pp. 155--171 (2022).
\bibitem[H2017]{Herl2017} B. Herlemont, Differential calculus on $h$-deformed spaces, SIGMA, vol. 13, pp. 82--109 (2017).
\bibitem[VdH1975]{Homb1975} A. P. Van den Hombergh, A note on Mickelsson's step algebra, Indagationes Mathematicae (Proceedings), vol. 78, No. 1, pp. 42--47. North-Holland. (1975, January).
\bibitem[VdH1976]{Homb1976} A. P. Van den Hombergh, Harish-Chandra Modules and Representations of Step Algebras, Ph. D. Thesis, Katolic University of Nijmegen (1976).
\bibitem[KhO2008]{Kho2008} S. Khoroshkin and O. Ogievetsky, Mickelsson algebras and Zhelobenko operators, J. Algebra, vol. 319, pp. 2113--2165 (2008).
\bibitem[KhO2010]{Kho2010} S. Khoroshkin and O. Ogievetsky, Diagonal Reduction Algebras of $\mathfrak{gl}$ Type, Funct Anal Its Appl 44, 182--198 (2010).
\bibitem[KhO2011]{Kho2011} S. Khoroshkin and O. Ogievetsky, Structure Constants of Diagonal Reduction Algebras of $\mathfrak{gl}$ Type, SIGMA, vol. 7, pp. 64--97 (2011).
\bibitem[KhO2017]{Kho2017} S. Khoroshkin and O. Ogievetsky, Diagonal Reduction Algebra and the Reflection Equation, Israel Journal of Mathematics, vol. 221, pp. 705--729 (2017).
\bibitem[M1973]{Mick1973} J. Mickelsson, Step Algebra of Semi-Simple Subalgebras of Lie Algebras, Reports on Mathematical Physics, vol. 4, pp. 307--218 (1973).
\bibitem[MA2015]{Mud2015} A. Mudrov and T. Ashton, R-matrix and Mickelsson algebras for orthogymplectic quantum groups, Journal of Mathematical Physics, vol. 56 (2015).
\bibitem[T2005]{Tol2005} V. N. Tolstoy, Fortieth Anniversary of Extremal Projector Method for Lie Symmetries, in: Noncommutative Geometry and Representation Theory in Mathematical Physics, pp.371--384 (2005).
\bibitem[T2010]{Tol2010} V. N. Tolstoy, Extremal Projectors for Contragredient Lie (Super) Symmetries (Short Review), Physics of Atomic Nuclei, vol. 74, pp. 1747--1757 (2010).
\bibitem[Zh1987]{Zhe1987} D. P. Zhelobenko, Extremal cocycles of Weyl groups, Functional Analysis and Its Applications, vol. 21, Issue 3,  pp. 183--192 (1987).
\bibitem[Zh1989]{Zhe1989} D. P. Zhelobenko, Extremal projectors and generalized Mickelsson algebras over reductive Lie algebras, Mathematics of the USSR-Izvestiya, vol. 33, pp.  85--100 (1989).
\end{thebibliography}

\end{document}